\numberwithin{equation}{section}
\newtheorem{theorem}{Theorem}[section]
\newtheorem{lemma}[theorem]{Lemma}
\newtheorem{proposition}[theorem]{Proposition}
\newtheorem{remark}[theorem]{Remark}
\newtheorem*{thma*}{Theorem A}
\newtheorem*{thmb*}{Theorem B}
\newtheorem*{thmc*}{Theorem C}
\newtheorem*{thmd*}{Theorem D}
\newtheorem*{thme*}{Theorem E}
\newtheorem{conjecture}[theorem]{Conjecture}
\newtheorem{definition}[theorem]{Definition}
\newtheorem*{thm*}{Main Theorem}
\begin{document}

\makeatletter

\newdimen\bibspace
\setlength\bibspace{2pt}   
\renewenvironment{thebibliography}[1]{%
 \section*{\refname 
       \@mkboth{\MakeUppercase\refname}{\MakeUppercase\refname}}%
     \list{\@biblabel{\@arabic\c@enumiv}}%
          {\settowidth\labelwidth{\@biblabel{#1}}%
           \leftmargin\labelwidth
           \advance\leftmargin\labelsep
           \itemsep\bibspace
           \parsep\z@skip     %
           \@openbib@code
           \usecounter{enumiv}%
           \let\p@enumiv\@empty
           \renewcommand\theenumiv{\@arabic\c@enumiv}}%
     \sloppy\clubpenalty4000\widowpenalty4000%
     \sfcode`\.\@m}
    {\def\@noitemerr
      {\@latex@warning{Empty `thebibliography' environment}}%
     \endlist}

\makeatother

\pdfbookmark[2]{On Oliver's $p$-group conjecture for Sylow subgroups
of unitary groups}{beg}

\title{{\bf On Oliver's $p$-group conjecture for Sylow subgroups
of unitary groups}
\footnotetext{\hspace*{-4 ex}
Department of Mathematics, Hubei University, Wuhan, $430062$, China\\
Supported by NSFC grants (12071117).\\
\hspace*{2 ex}Xingzhong Xu's E-mail: xuxingzhong407@hubu.edu.cn; xuxingzhong407@126.com.
}}

\author{{\small{Xingzhong Xu} } \\
}
\date{}
\maketitle

{\small

\noindent\textbf{Abstract.} {\small{In this paper, we focus on Oliver's $p$-group conjecture. We use elementary method to prove that
Oliver's $p$-group conjecture holds for  Sylow $p$-subgroups
of unitary groups.
 }}\\

\noindent\textbf{Keywords: }{\small { Oliver's $p$-group; the Thompson subgroup. }}

\noindent\textbf{Mathematics Subject Classification (2020):}  \ 20D15.}

\section{\bf Introduction}

In \cite{O}, Oliver proposed Oliver's $p$-group conjecture, a purely group-theoretic problem.
A positive resolution of this conjecture would give the existence and uniqueness of centric linking systems for
fusion systems at odd primes(also see \cite{Ly}), and this can be used to prove the Martino-Priddy conjecture(see \cite{O}).

Curiously, the Martino-Priddy conjecture has been proved by \cite{O, O2}, and the existence and uniqueness of centric linking system
has been solved by \cite{Ch, O3}, but Oliver's $p$-group conjecture is still open.
There are only some works\cite{GHL, GHM, GHM2, Ly, X, LXZ} about this conjecture.

Now, we list Oliver's $p$-group conjecture as follows.

\begin{conjecture}\cite[Conjecture 3.9]{O} Let $S$ be a $p$-group for an odd prime $p$. Then
$$J(S)\leq \mathfrak{X}(S),$$
where $J(S)$ is the Thompson subgroup generated by all elementary abelian $p$-subgroups whose rank is
the $p$-rank of $S$, and $\mathfrak{X}(S)$ is the Oliver subgroup described in \cite[Definition 3.1]{O} and \cite[Definition]{GHL}.
\end{conjecture}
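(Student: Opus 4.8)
The plan is to prove the inclusion $J(S) \le \mathfrak{X}(S)$ for every $p$-group $S$ with $p$ odd. Since $J(S)$ is generated by the elementary abelian subgroups of $S$ of maximal rank, it suffices to show that each such subgroup $E \le S$ already lies in $\mathfrak{X}(S)$; once this is known for all of them, their join $J(S)$ lies in the (normal) subgroup $\mathfrak{X}(S)$ as well. I would first record the two structural facts about $\mathfrak{X} := \mathfrak{X}(S)$ that make the problem tractable: it is characteristic in $S$, and it is self-centralizing, i.e.\ $C_S(\mathfrak{X}) \le \mathfrak{X}$ (this is Oliver's observation, cf.\ \cite{O, GHL}). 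Writing $V = \Omega_1(\ZZ(\mathfrak{X}))$, self-centralization forces $S/\mathfrak{X}$ to act faithfully enough on the sections appearing in the defining series of $\mathfrak{X}$, and $V$ is the module on which the whole question is tested.

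Next I would translate the defining $(p-1)$-fold commutator condition into Lie-theoretic language. Recall that $\mathfrak{X}$ is the largest normal subgroup of $S$ admitting a series $1 = Q_0 \le Q_1 \le \cdots \le Q_n = \mathfrak{X}$ of subgroups normal in $S$ with $[\Omega_1(C_S(Q_{i-1})), Q_i; p-1] = 1$ for each $i$, where $[\,X, Y; p-1\,]$ is the $(p-1)$-fold iterated commutator $[X, Y, \ldots, Y]$. Because the relevant sections have exponent $p$ and the nilpotency bound $p-1$ lies below $p$, the Lazard correspondence applies and, following \cite{GHL, GHM}, converts this group-commutator condition into the statement that the image of each $Q_i$ acts on the associated graded Lie ring by operators whose $(p-1)$-fold Lie bracket vanishes. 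Assume for contradiction that some maximal-rank elementary abelian $E$ is not contained in $\mathfrak{X}$. The strategy is then to try to enlarge the series: set $Q_{n+1} = \langle E^S \rangle \mathfrak{X}$ and attempt to verify $[\Omega_1(C_S(\mathfrak{X})), Q_{n+1}; p-1] = 1$, which by maximality of $\mathfrak{X}$ would force $Q_{n+1} = \mathfrak{X}$, i.e.\ $E \le \mathfrak{X}$. Here the replacement theorems of Thompson and Timmesfeld are the natural source of control: within the class of maximal-rank elementary abelian subgroups they produce a quadratic offender, so that the double commutator $[V, E, E]$ is already small, and for $p = 3$ (where $p-1 = 2$) this quadratic information is exactly the $(p-1)$-fold condition.

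\textbf{The main obstacle}---and the reason the conjecture remains open in full generality---is precisely the gap between quadratic action and the $(p-1)$-fold condition when $p \ge 5$. Quadraticity controls $[V, E, E]$, but the definition requires the far stronger vanishing of $[V, E, \ldots, E]$ with $p-1$ entries, and there is no general mechanism bounding the nilpotency degree of the action of a maximal-rank elementary abelian subgroup on an arbitrary faithful module by $p-1$; in the Lie reformulation this is the assertion that a maximal-rank abelian subalgebra must act with Lie-nilpotency degree below $p-1$, which can fail for abstractly presented modules, since the associated Lie ring of a general $p$-group carries no grading or invariant form constraining these degrees. This is exactly where structural information is indispensable: the unipotent radical of a classical group comes with an explicit root-space grading and an invariant alternating or Hermitian form, and it is this extra structure that lets one compute the iterated commutators and verify the bound, whereas for an arbitrary $p$-group the degree is uncontrolled. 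I would therefore expect any complete proof of the statement as worded to require either a new invariant that bounds this nilpotency degree intrinsically, or a reduction of the general case to $p$-groups carrying such a grading.
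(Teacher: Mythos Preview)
The statement you are attempting to prove is \emph{Conjecture~1.1} in the paper, and the paper does not prove it: it is explicitly presented as an open problem (``Oliver's $p$-group conjecture is still open''). There is therefore no ``paper's own proof'' of this statement to compare against. What the paper actually establishes is the special case Theorem~1.2, namely that the conjecture holds when $S$ is a Sylow $p$-subgroup of a unitary group $\mathrm{U}_n(\mathbb{F}_q)$ for $p\ge 5$; this is done by showing directly that $\mathfrak{X}(\mathcal{S})=\mathcal{S}$ in defining characteristic (Theorems~3.5 and~3.9) and by quoting the wreath-product structure and \cite[Lemma~5.4]{GHM2} in coprime characteristic.

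Your proposal is not a proof of the conjecture, and to your credit you say so yourself: the paragraph headed ``The main obstacle'' correctly identifies that Thompson--Timmesfeld replacement only yields a quadratic offender, which matches the $(p-1)$-fold condition exactly when $p=3$ but leaves a genuine gap for $p\ge 5$. So what you have written is a survey of the known reduction ideas together with an honest statement of where they stall, not a proof strategy that could be completed as written. One minor inaccuracy: the $p=3$ case is not as immediate as your sketch suggests, since a quadratic offender $E$ need not be normal in $S$, and the definition of $\mathfrak{X}(S)$ requires extending the $Q$-series by a \emph{normal} subgroup; passing from $E$ to $\langle E^S\rangle$ can destroy the quadratic bound. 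In any case, there is nothing here to compare to the paper, because the paper makes no claim to prove the general conjecture.
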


$Motivations~ of~ the~ paper:$ In \cite{GHM2}, they had proved that Conjecture 1.1 holds for the Sylow subgroups of the  symmetric groups and the
general linear groups. That is the one motivation of our paper: we prove that Conjecture 1.1 holds for Sylow subgroups
of other classical groups. The reviewers told us that \cite{O} had prove the related results about these groups.
Here, we use elementary method to prove the following:

\begin{theorem} Let $m\geq1$, let $p\geq 5$ be a prime, and let $q$ be any prime power. Let $S$ be a Sylow $p$-subgroup of the
unitary group $\mathrm{U}_{n}(\mathbb{F}_{q})$(subgroup of $\mathrm{GL}_{n}(\mathbb{F}_{q^2})$). Then Conjecture 1.1 holds for $S$.
\end{theorem}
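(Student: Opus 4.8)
\emph{The plan.} The argument will rest on the following reformulation of \cite[Definition 3.1]{O} (see also \cite[Definition]{GHL}): $\mathfrak X(S)$ is the largest subgroup $K\trianglelefteq S$ admitting a chain $1=K_0\le K_1\le\dots\le K_t=K$ of subgroups normal in $S$ such that the $(p-1)$-fold commutator $[\,\Omega_1(\Cent_S(K_{i-1})),K_i,\dots,K_i\,]$ (with $p-1$ occurrences of $K_i$) is trivial for every $i$. It is therefore enough to produce a normal subgroup $\mathfrak Y\trianglelefteq S$ with $J(S)\le\mathfrak Y$ and of nilpotency class at most $p-2$: for such a $\mathfrak Y$ the one-term chain $1\le\mathfrak Y$ already works, because $\mathfrak Y\trianglelefteq S$ forces $[\,\Omega_1(S),\mathfrak Y,\dots,\mathfrak Y\,]\le\gamma_{p-1}(\mathfrak Y)$, and $\gamma_{p-1}(\mathfrak Y)=1$ once $\mathfrak Y$ has class $\le p-2$. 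Since $p\ge5$, the target is thus a normal $\mathfrak Y$ of class at most $3$ — and the plan is to find one of class at most $2$ — with $J(S)\le\mathfrak Y$, uniformly in $n$ and $q$.

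\emph{Defining characteristic ($p\mid q$).} Here $S$ is the group of upper unitriangular matrices in $\mathrm U_n(\mathbb F_q)\le\mathrm{GL}_n(\mathbb F_{q^2})$. Realising $\mathrm U_n(\mathbb F_q)$ as the isometry group of the Hermitian form with Gram matrix the antidiagonal identity, one has $S=\{g:\ g\text{ upper unitriangular over }\mathbb F_{q^2},\ g^{-1}=g^{\ast}\}$, where $g\mapsto g^{\ast}$ is the adjoint of the form; the relation $g^{\ast}=g^{-1}$ expresses the entries of $g$ on and below the antidiagonal through those strictly above it, subject to a Hermitian trace condition on the antidiagonal. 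I would then take $\mathfrak Y=\OO_p(P)$, where $P$ is the maximal parabolic of $\mathrm U_n(\mathbb F_q)$ — chosen compatibly with the Borel whose unipotent radical is $S$ — stabilising a maximal totally isotropic subspace $W$, with $m:=\dim W=\lfloor n/2\rfloor$. Since $\OO_p(P)\le S\le P$ and $\OO_p(P)\trianglelefteq P$, one gets $\mathfrak Y\trianglelefteq S$; and $\mathfrak Y$ has class at most $2$: for $n=2m$ it is abelian (the group of Hermitian forms on $W$), and for $n=2m+1$ it is a Heisenberg-type group of exponent $p$ with $[\mathfrak Y,\mathfrak Y]\le\ZZ(\mathfrak Y)$ and with $\ZZ(\mathfrak Y),\ \mathfrak Y/\ZZ(\mathfrak Y)$ both elementary abelian.

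\emph{The main step: $J(S)\le\mathfrak Y$.} As $J(S)$ is generated by the elementary abelian subgroups of $S$ of maximal rank, it is enough to show that each such subgroup lies in $\mathfrak Y$. The plan is: (i) determine the $p$-rank of $S$ — namely $m^{2}\log_p q$ for $n=2m$ and $(m^{2}+m)\log_p q$ for $n=2m+1$; (ii) observe that this rank is realised inside $\mathfrak Y$, with, for $n$ odd, the maximal elementary abelians of $\mathfrak Y$ being precisely the polarisations $\ZZ(\mathfrak Y)\cdot A$ where $A/\ZZ(\mathfrak Y)$ is a maximal totally isotropic subgroup of $\mathfrak Y/\ZZ(\mathfrak Y)$ for the commutator pairing into $\ZZ(\mathfrak Y)$; and (iii) show — arguing directly with the matrix entries and the Hermitian relation, and treating $n=2m$ and $n=2m+1$ in parallel — that no elementary abelian subgroup of $S$ lying outside $\mathfrak Y$ reaches this rank (small $n$, e.g.\ $n=3$ where $S=\mathfrak Y$ is extraspecial of exponent $p$, being done by hand if needed). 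Granting (iii), $J(S)\le\mathfrak Y$, whence $J(S)\le\mathfrak X(S)$ by the first paragraph.

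\emph{Non-defining characteristic, and the main obstacle.} When $p\nmid q$, a Sylow $p$-subgroup of $\mathrm U_n(\mathbb F_q)$ is a direct product of wreath products $C_{p^{a}}\wr P$ with $P$ a Sylow $p$-subgroup of a symmetric group; since $J(\,\cdot\,)$ and $\mathfrak X(\,\cdot\,)$ distribute over direct products, Conjecture 1.1 for $S$ reduces to its validity for these factors, which is \cite{GHM2} when $a=0$ and follows by a short additional argument (the Thompson subgroup of $C_{p^{a}}\wr P$ being abelian) in general. I expect the real difficulty to be step (iii) above: excluding the elementary abelian subgroups of maximal rank that might escape $\mathfrak Y$. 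In the unitary case this is genuinely more delicate than for $\mathrm{GL}_n$, because the Hermitian relation couples the entries on the two sides of the antidiagonal, which both alters the rank count and can enlarge the centralisers $\Cent_S(\,\cdot\,)$ near the middle of the matrix. Carrying this through by elementary means, uniformly in $n$ and $q$, is where the bulk of the work will lie.
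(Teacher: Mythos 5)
Your reduction in the opening paragraph is correct, and your treatment of the coprime case $p\nmid q$ is essentially the paper's own: by Weir, $S$ is built from iterated wreath products $C_{p^r}\wr C_p\wr\cdots\wr C_p$, and \cite[Lemma 5.4]{GHM2} applied inductively (starting from $J(C_{p^r})=C_p$) shows $J(S)$ is elementary abelian, hence an abelian normal subgroup of $S$, hence contained in $\mathfrak{X}(S)$. The problem is the defining-characteristic case, where your argument has a genuine gap that you have in fact located yourself: step (iii), the assertion that every elementary abelian subgroup of $S$ of maximal rank lies in $\mathfrak{Y}=\OO_p(P)$, is never proved, and it carries the entire weight of the proof. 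Determining the $p$-rank of the unitary unitriangular group and classifying the elementary abelian subgroups attaining it is a substantial piece of work (the analogue for $\mathrm{GL}_n$ is already a nontrivial theorem), and nothing in your sketch indicates how the Hermitian coupling of entries across the antidiagonal would be handled. As written, the defining-characteristic case is a plan, not a proof.

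It is also an unnecessary plan: the paper never locates $J(S)$ at all in defining characteristic, but instead proves the stronger statement $\mathfrak{X}(S)=S$. It uses exactly your subgroup $\mathfrak{Y}$ (denoted $\mathcal{A}$, the unipotent radical of the Siegel parabolic, abelian for $n=2m$ and of class $2$ with centre $\mathcal{A}_0$ for $n=2m+1$), but as the \emph{first term of a $Q$-series} rather than as a container for $J(S)$. Concretely, $S=\mathcal{A}\rtimes\mathcal{D}$ with $\mathcal{D}$ the unitriangular group of $\mathrm{GL}_m(\mathbb{F}_{q^2})$; $S$ is covered by the normal subgroups $\widetilde{\mathcal{N}}_{ij}=\{X_{D,P}:D\in\mathcal{N}_{ij}\}$ coming from the abelian normal subgroups $\mathcal{N}_{ij}$ of $\mathcal{D}$; a block-matrix computation gives $[\mathcal{A},\widetilde{\mathcal{N}}_{ij};3]=1$; and a separate argument shows $C_S(\mathcal{A})\le\mathcal{A}$ (resp.\ $\le\mathcal{A}_0$ for $n$ odd). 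Since $p-1\ge 4$, each chain $1\le\mathcal{A}\le\widetilde{\mathcal{N}}_{ij}$ satisfies condition $(\ast)$ of Definition 2.1, so each $\widetilde{\mathcal{N}}_{ij}$ lies in $\mathfrak{X}(S)$ and hence $\mathfrak{X}(S)=S$. The moral is that what you actually need about $\mathfrak{Y}$ is not $J(S)\le\mathfrak{Y}$ but the much cheaper facts that $\Omega_1(C_S(\mathfrak{Y}))\le\mathfrak{Y}$ and that $S$ acts on $\mathfrak{Y}$ with small nilpotency length; replacing your step (iii) by these two computations closes the gap.
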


\begin{proof} First we consider the case where $q$ is a power of $p$ (defining characteristic). By Theorem 3.5 and 3.9, we have
$\mathfrak{X}(S)=S$. So, $J(S)\leq \mathfrak{X}(S)$.

Now we turn to the case where $( p, q)=1$ (coprime characteristic). It was first shown by Weir \cite[p.532]{Weir}
that $S$ is a  wreath product as follows:
$$C_{p^r}\wr C_p\wr\cdots\wr C_p$$ for some integer $r$.
Applying
\cite[Lemma 5.4]{GHM2}, we see that $J(S)$ is elementary abelian by induction. So $J(S)\leq  \mathfrak{X}(S)$.
\end{proof}

$Structure~ of ~ the~ paper:$ After recalling the basic definitions and properties of Oliver's $p$-group in Section 2,
and we lists some properties of Sylow subgroups
of unitary groups and calculations of $\mathfrak{X}(S)$ in Section 3.

\section{\bf Oliver's $p$-group and some lemmas}

In this section we collect some contents that will be need later, we refer to \cite{GHL, GHM,O}.
First, we recall the definition of Oliver's $p$-group as follows.
\begin{definition}\cite[Definition 3.1]{O},\cite[Definition]{GHL} Let $S$ be a finite $p$-group and
$K\unlhd S$ a normal subgroup. There exists a sequence
$$1=Q_0\leq Q_1\leq\cdots\leq Q_n=\mathfrak{X}(S)$$
such that $Q_i\unlhd S$, and such that
$$[\Omega_1(C_S(Q_{i-1})), Q_i; p-1]=1\quad\quad \quad\quad \quad\quad (\ast)$$
holds for each $1\leq i\leq n$. This series is called as {\bf $Q$-series}.
The unique largest normal subgroup of $S$ which admits such a sequence
is called $\mathfrak{X}(S)$, {\bf the Oliver subgroup} of $S$.
\end{definition}

\begin{remark}\cite[p.334]{O} If $K, L\unlhd S$ and there exist two sequences
$$1=Q_0\leq Q_1\leq\cdots\leq Q_n=K;~~~1=R_0\leq R_1\leq\cdots\leq R_m=L$$
such that $Q_i\unlhd S, R_j\unlhd S$, and such that
$$[\Omega_1(C_S(Q_{i-1})), Q_i; p-1]=1~~~ \mathrm{and} ~~~[\Omega_1(C_S(R_{j-1})), R_j; p-1]=1$$
holds for each $1\leq i\leq n$ and $1\leq j\leq m$ respectively.
Then we have a sequence
$$1=Q_0\leq Q_1\leq\cdots\leq Q_n=K\leq Q_nR_0\leq Q_nR_1\leq\cdots\leq Q_nR_m=KL$$
satisfies the condition $(\ast)$.
\end{remark}

The following lemmas are important to prove the main results.
\begin{lemma}\cite[Lemma 3.2]{O} Let $S$ be a finite $p$-group, then $C_{S}(\mathfrak{X}(S))=Z(\mathfrak{X}(S))$.
\end{lemma}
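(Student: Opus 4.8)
Write $X=\mathfrak{X}(S)$ and $C=C_S(X)$. Since $Z(X)=X\cap C_S(X)\le C$ and $Z(X)$ is abelian, the whole content of the statement is the reverse inclusion $C\le X$, which immediately yields $C=C\cap X=C_X(X)=Z(X)$. So the plan is to prove $C_S(\mathfrak{X}(S))\le\mathfrak{X}(S)$, and for this I would exploit the maximality built into Definition~2.1 together with the gluing of $Q$-series in Remark~2.2: it suffices to extend a given $Q$-series $1=Q_0\le\cdots\le Q_n=X$ to a $Q$-series of the normal subgroup $XC$, for then $XC\le\mathfrak{X}(S)=X$ and hence $C\le X$.

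To extend it, I would append to the chain the terms $X\,Z_i(C)$, where $1=Z_0(C)\le Z_1(C)\le\cdots$ is a central series of $C$ with each term characteristic in $C$, hence normal in $S$ (one may take the upper central series, or the ``lower $\Omega_1$-central series'' defined by $Z_i(C)/Z_{i-1}(C)=\Omega_1(Z(C/Z_{i-1}(C)))$). The commutator bookkeeping exploits that $C_S(X\,Z_{i-1}(C))=C\cap C_S(Z_{i-1}(C))$ centralizes both $X$ and $Z_{i-1}(C)$: writing $A_i=\Omega_1(C_S(X\,Z_{i-1}(C)))$, the fact that $A_i$ centralizes $X$ gives $[A_i,X\,Z_i(C)]=[A_i,Z_i(C)]\le[C,Z_i(C)]\le Z_{i-1}(C)$, and each further commutator against $X\,Z_i(C)$ collapses back to a commutator of something in $C$ with $Z_i(C)$, hence descends one more step of the central series. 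As a model case, the single appended step $X\le X\,\Omega_1(Z(C))$ is legitimate outright, since $\Omega_1(C_S(X))$ centralizes both $X$ and $\Omega_1(Z(C))\le Z(C)$, so that appended commutator is already trivial; this shows $\Omega_1(Z(C))\le\mathfrak{X}(S)$, and the idea is to bootstrap this up the whole central series of $C$.

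The main obstacle is the step count. The crude descent above only delivers the condition $(\ast)$ after a number of commutator steps equal to the nilpotency class of $C$ — and if one tries to trim this using the inclusions $Z_{i-1}(C)\le X$ proved along the way, the remaining commutators land in the lower central series of $X$ and one is then forced to require $p-1$ to exceed the class of $X$ — whereas $(\ast)$ permits only $p-1$ steps. Closing this gap is the real work: I would expect an induction on $|S|$ (or on $[S:\mathfrak{X}(S)]$) which, using that the terms $\Omega_1(Z_i(C))$ already lie in $\mathfrak{X}(S)$ together with the gluing of Remark~2.2, reduces matters to a configuration in which $C$ has class at most $p-1$, where the explicit $Q$-series above finishes. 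That reduction is the delicate point; the commutator inclusions built on $[C,X]=1$ and the use of Remark~2.2 are routine.
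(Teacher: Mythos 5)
The paper gives no proof of this lemma; it is quoted verbatim from Oliver \cite[Lemma 3.2]{O}. Measured against Oliver's actual argument, your proposal has the right target ($C:=C_S(\mathfrak{X}(S))\leq \mathfrak{X}(S)$ via maximality and extension of a $Q$-series), and your ``model case'' $X\leq X\,\Omega_1(Z(C))$ is correct. But the bootstrap up a central series of $C$ is genuinely gapped, as you yourself concede: with $Z_i(C)$ the upper central series of $C$ you only get $[C,Z_i(C)]\leq Z_{i-1}(C)$, so each appended step needs as many commutators as the index $i$, and nothing bounds the nilpotency class of $C$ by $p-1$. The proposed rescue by ``induction on $|S|$'' is not carried out and is not the right fix.

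The missing idea is to take central slices of $C$ \emph{relative to $S$}, not relative to $C$. Set $X=\mathfrak{X}(S)$, $Z=Z(X)=C\cap X$, and suppose $C\nleq X$, so $C/Z\neq 1$. Since $X\unlhd S$ forces $C\unlhd S$, the nontrivial normal subgroup $C/Z$ of the $p$-group $S/Z$ meets $Z(S/Z)$ nontrivially; let $Z'\unlhd S$ be the preimage, so $Z\lneq Z'\leq C$ and $[Z',S]\leq Z$. Append $XZ'$ to a $Q$-series ending at $X$: for $a\in\Omega_1(C_S(X))\leq C$ one has $[a,xz']=[a,z']\in[S,Z']\leq Z$, and then $[Z,XZ']=1$ because $Z=Z(X)$ is centralized by $X$ and by $Z'\leq C_S(X)$. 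Thus $[\Omega_1(C_S(X)),XZ';2]=1$, and $p-1\geq 2$ for odd $p$, so condition $(\ast)$ holds and $XZ'\leq\mathfrak{X}(S)=X$. But $Z'\cap X=Z'\cap C\cap X=Z\neq Z'$, a contradiction. This two-step collapse is uniform in the class of $C$, which is exactly what your version lacks.
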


\begin{lemma}\cite[Lemma 3.3]{O} Let $S$ be a finite $p$-group. Let $Q\unlhd S$ be any normal subgroup
such that
$$[\Omega_1(Z(\mathfrak{X}(S))), Q; p-1]=1.$$
Then $Q\leq \mathfrak{X}(S)$.
\end{lemma}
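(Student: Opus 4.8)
The plan is to produce a $Q$-series for the normal subgroup $\mathfrak{X}(S)Q$ of $S$; since the Oliver subgroup is by definition the unique largest normal subgroup of $S$ admitting such a series, this forces $\mathfrak{X}(S)Q\le\mathfrak{X}(S)$ and hence $Q\le\mathfrak{X}(S)$.

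First I would set $A:=\Omega_1(Z(\mathfrak{X}(S)))$ and record three elementary facts: $A$ is characteristic in $\mathfrak{X}(S)$, hence $A\unlhd S$; $A\le Z(\mathfrak{X}(S))$, hence $[A,\mathfrak{X}(S)]=1$; and, by the lemma $C_S(\mathfrak{X}(S))=Z(\mathfrak{X}(S))$, the group $\Omega_1(C_S(\mathfrak{X}(S)))$ is exactly $A$. Also $\mathfrak{X}(S)Q\unlhd S$, being a product of normal subgroups.

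Now take a $Q$-series $1=Q_0\le Q_1\le\cdots\le Q_n=\mathfrak{X}(S)$ as provided by the definition of $\mathfrak{X}(S)$ and append the single term $Q_{n+1}:=\mathfrak{X}(S)Q$. The conditions $(\ast)$ for $1\le i\le n$ are inherited, so the only thing left to verify is
$$[\Omega_1(C_S(Q_n)),Q_{n+1};p-1]=[A,\mathfrak{X}(S)Q;p-1]=1.$$
For this I would show by induction on $k\ge 1$ that $[A,\mathfrak{X}(S)Q;k]=[A,Q;k]$. The base case uses that every element of $\mathfrak{X}(S)Q$ has the form $xy$ with $x\in\mathfrak{X}(S)$, $y\in Q$, together with the identity $[a,xy]=[a,y]\,[a,x]^{y}$ and $[a,x]\in[A,\mathfrak{X}(S)]=1$, giving $[A,\mathfrak{X}(S)Q]=[A,Q]$. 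For the inductive step, observe that each $[A,Q;k]$ is a normal subgroup of $S$ lying inside $A$ (the chain $A\ge[A,Q]\ge[A,Q;2]\ge\cdots$ is descending), so it is still centralized by $\mathfrak{X}(S)$, and the same computation yields $[A,\mathfrak{X}(S)Q;k+1]=[\,[A,Q;k],\mathfrak{X}(S)Q\,]=[\,[A,Q;k],Q\,]=[A,Q;k+1]$. Taking $k=p-1$ and invoking the hypothesis $[A,Q;p-1]=1$ finishes the check, so the augmented chain is a genuine $Q$-series for $\mathfrak{X}(S)Q$.

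Finally, since $\mathfrak{X}(S)Q$ is normal in $S$ and admits a $Q$-series, the maximality clause in the definition of the Oliver subgroup gives $\mathfrak{X}(S)Q\le\mathfrak{X}(S)$, i.e.\ $Q\le\mathfrak{X}(S)$. The only step needing care is the commutator bookkeeping in the induction: one must keep checking that the whole descending chain $A\ge[A,Q]\ge[A,Q;2]\ge\cdots$ stays inside $Z(\mathfrak{X}(S))$, so that $\mathfrak{X}(S)$ continues to centralize it and each stage really collapses to a commutator involving $Q$ alone; the remaining ingredients are just the lemma $C_S(\mathfrak{X}(S))=Z(\mathfrak{X}(S))$ and the defining maximality of $\mathfrak{X}(S)$ (one may also phrase this last step through the closure-under-products observation in the Remark).
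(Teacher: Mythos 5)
Your proof is correct. Note that the paper itself gives no proof of this lemma --- it is quoted verbatim from Oliver's article as \cite[Lemma 3.3]{O} --- and your argument is essentially the standard one from that source: append $\mathfrak{X}(S)Q$ to a $Q$-series for $\mathfrak{X}(S)$, use $C_S(\mathfrak{X}(S))=Z(\mathfrak{X}(S))$ to identify $\Omega_1(C_S(Q_n))$ with $A=\Omega_1(Z(\mathfrak{X}(S)))$, and collapse $[A,\mathfrak{X}(S)Q;k]$ to $[A,Q;k]$ via $[a,xy]=[a,y][a,x]^y$ before invoking maximality of the Oliver subgroup. All the supporting details you flag (normality of the $[A,Q;k]$, their containment in $Z(\mathfrak{X}(S))$) check out.
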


\begin{theorem}\cite[Theorem 1.7]{GHM2}Let $p$ be a prime and G a finite $p$-group. If $G$ is generated by its abelian normal subgroups, then
Then Conjecture 1.4 holds for $G$.
\end{theorem}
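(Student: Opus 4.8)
The plan is to prove the sharper statement $\mathfrak{X}(G)=G$; Conjecture~1.1 then follows at once, since $J(G)\le G=\mathfrak{X}(G)$. Since $\mathfrak{X}(G)$ is a subgroup of $G$ (Definition~2.1), it is enough to show that \emph{every} abelian normal subgroup $A\unlhd G$ lies in $\mathfrak{X}(G)$: the subgroup generated by all such $A$ is $G$ by hypothesis, so this forces $\mathfrak{X}(G)=G$. (Remark~2.2 would also close the argument once the individual $A$ are handled, but it is not strictly needed for this reduction.)

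So I would fix an abelian normal subgroup $A\unlhd G$, put $V:=\Omega_1(Z(\mathfrak{X}(G)))$, and note that $V\unlhd G$, being characteristic in the normal subgroup $Z(\mathfrak{X}(G))$. By Lemma~2.4 it then suffices to check that $[V,A;p-1]=1$, and this I would obtain from a short commutator computation: as $V$ and $A$ are both normal in $G$ we have $[V,A]\le V\cap A$, and since $V\cap A\le A$ with $A$ abelian, commuting once more with $A$ gives $[[V,A],A]\le[V\cap A,A]\le[A,A]=1$. Hence $[V,A;2]=1$, and because $p$ is odd (as in Conjecture~1.1) we have $p-1\ge2$, so a fortiori $[V,A;p-1]=1$. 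Lemma~2.4 now yields $A\le\mathfrak{X}(G)$, which completes the proof.

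I do not expect a genuine obstacle here: the argument reduces to the displayed commutator identity together with Lemma~2.4. The only points that need care are the two elementary containments $[V,A]\le V$ and $[V,A]\le A$ (each using normality of the relevant subgroup), the reduction to a single abelian normal subgroup, and the exact place where oddness of $p$ is used, namely $p-1\ge2$. This last hypothesis is essential rather than cosmetic: for $p=2$ one has $J(D_8)=D_8$ but $\mathfrak{X}(D_8)=Z(D_8)$, so the conclusion would fail.
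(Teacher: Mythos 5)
Your proof is correct. Note, though, that the paper does not prove this statement at all: it is quoted verbatim (with its original numbering, which is why it refers to a ``Conjecture 1.4'' that does not exist in this paper --- the intended reading is Conjecture 1.1) from \cite[Theorem 1.7]{GHM2}, so there is no internal proof to compare against. Your argument --- for each abelian normal $A\unlhd G$ and $V=\Omega_1(Z(\mathfrak{X}(G)))$, use $[V,A]\le V\cap A\le A$ and $[A,A]=1$ to get $[V,A;2]=1$, hence $[V,A;p-1]=1$ for odd $p$, apply Lemma~2.4 to get $A\le\mathfrak{X}(G)$, and conclude $\mathfrak{X}(G)=G$ --- is exactly the standard argument in the cited source, and your $D_8$ remark correctly identifies where oddness of $p$ is essential.
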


\begin{theorem}\cite[Lemma 5.4]{GHM2} Suppose that $p$ is an odd prime and that $P$ is a $p$-group such that $J(P)$ is elementary abelian.
Then $J(P\wr C_p)$ is elementary abelian too. In particular, if $P\neq 1$ then $J(P\wr C_p)$ is the copy of $J(P)^p$ in the base
subgroup $P^p\leq  P \wr C_p$.
\end{theorem}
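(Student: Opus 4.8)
\emph{Proof proposal.}
Write $W=P\wr C_p=B\rtimes\langle\sigma\rangle$, where $B=P^p$ is the base subgroup and $\sigma$ cyclically permutes the $p$ direct factors; for a finite $p$-group $Q$ let $\mathrm{rk}(Q)$ denote its $p$-rank and $\mathcal{A}(Q)$ the set of elementary abelian subgroups of rank $\mathrm{rk}(Q)$, so that $J(Q)=\langle\mathcal{A}(Q)\rangle$. The plan is to determine $\mathcal{A}(W)$ exactly and then read off $J(W)$. I will show $\mathrm{rk}(W)=p\cdot\mathrm{rk}(P)$ and that $\mathcal{A}(W)=\{A_1\times\cdots\times A_p : A_i\in\mathcal{A}(P)\}$, a family of subgroups all lying in $B$. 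Granting this, each member of $\mathcal{A}(W)$ lies in $J(P)\times\cdots\times J(P)=J(P)^p$, which is elementary abelian by hypothesis; hence $J(W)\le J(P)^p$ and $J(W)$ is elementary abelian. For the reverse inclusion when $P\ne1$, fix $A_0\in\mathcal{A}(P)$: for each $A\in\mathcal{A}(P)$ and each coordinate $i$ the subgroup with $A$ in slot $i$ and $A_0$ in every other slot belongs to $\mathcal{A}(W)$ and hence lies in $J(W)$, so the same is true of the subgroup with $A$ in slot $i$ and the identity elsewhere (a subgroup of the former, since $1\in A_0$). Letting $A$ and $i$ vary, these subgroups generate the copy of $J(P)^p$ inside $B$, so $J(W)=J(P)^p$.

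It remains to compute $\mathcal{A}(W)$. If $E\le B$ is elementary abelian, the coordinate projections $\pi_i\colon B\to P$ give $E\le\pi_1(E)\times\cdots\times\pi_p(E)$ with each $\pi_i(E)$ elementary abelian, so $|E|\le\prod_i|\pi_i(E)|\le p^{p\cdot\mathrm{rk}(P)}$; equality throughout forces $\pi_i(E)\in\mathcal{A}(P)$ for every $i$ and $E=\pi_1(E)\times\cdots\times\pi_p(E)$. If instead $E\le W$ is elementary abelian with $E\not\le B$, then $[E:E\cap B]=p$ since $W/B\cong C_p$, and for any $g\in E\setminus B$ abelianness of $E$ gives $E\cap B\le C_B(g)$. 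Writing $g=a\sigma^{j}$ with $a\in B$ and $j\not\equiv0\pmod p$, the permutation induced by $\sigma^{j}$ on the $p$ coordinates is a single $p$-cycle (this is where $p$ prime is used), so the fixed-point equations $b=gbg^{-1}$ express every coordinate of $b\in C_B(g)$ in terms of $b_1$; hence the first-coordinate map embeds $C_B(g)$ into $P$, giving $\mathrm{rk}(E\cap B)\le\mathrm{rk}(C_B(g))\le\mathrm{rk}(P)$ and $\mathrm{rk}(E)\le\mathrm{rk}(P)+1$. Since $P\ne1$ forces $\mathrm{rk}(P)\ge1$ and $p$ is odd, $p\cdot\mathrm{rk}(P)-\bigl(\mathrm{rk}(P)+1\bigr)=(p-1)\mathrm{rk}(P)-1\ge1$, so every elementary abelian subgroup of $W$ of maximal rank lies in $B$ and, by the projection computation, has the asserted product form; conversely each such product is elementary abelian of rank $p\cdot\mathrm{rk}(P)$. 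This pins down $\mathcal{A}(W)$ and completes the argument.

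The main obstacle is the treatment of elementary abelian $E\not\le B$: one must handle an arbitrary $g=a\sigma^{j}$ (not merely $g=\sigma$) and see that its centralizer in $B$ has $p$-rank at most $\mathrm{rk}(P)$; the cleanest route is the injectivity of the first-coordinate projection on $C_B(g)$, which reduces to the fact that $\sigma^{j}$ acts as one $p$-cycle when $p\nmid j$. The remaining ingredients — the projection estimate in $B$, the numerical inequality $(p-1)\mathrm{rk}(P)\ge2$, and the two inclusions between $J(W)$ and $J(P)^p$ — are routine once $\mathcal{A}(W)$ is identified. (For $p=2$ with $\mathrm{rk}(P)=1$ the numerical inequality fails, which is precisely why the hypothesis that $p$ is odd cannot be dropped.)
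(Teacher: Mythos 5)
This statement is quoted in the paper directly from \cite[Lemma 5.4]{GHM2} with no proof supplied, so there is no internal argument to compare against; your proof must be judged on its own. It is correct and self-contained: the projection bound $|E|\le\prod_i|\pi_i(E)|$ inside the base group $B=P^p$, the embedding of $C_B(g)$ into $P$ via the first coordinate when $g=a\sigma^j$ with $p\nmid j$ (so that $\mathrm{rk}(E)\le\mathrm{rk}(P)+1$ for elementary abelian $E\not\le B$), and the inequality $(p-1)\,\mathrm{rk}(P)-1\ge1$ together pin down $\mathcal{A}(P\wr C_p)$ as the products $A_1\times\cdots\times A_p$ with $A_i\in\mathcal{A}(P)$, from which both inclusions between $J(P\wr C_p)$ and $J(P)^p$ follow as you describe; this is essentially the mechanism of the original source, and your closing remark correctly identifies where oddness of $p$ is used. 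The only loose end is the degenerate case $P=1$ for the first assertion, where your numerical inequality is unavailable but the claim is trivial since $P\wr C_p\cong C_p$.
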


\section{\bf Unitary groups, defining characteristic}

Following the terminology of Reid(\cite{R}), we refer to the bottom left to top right diagonal of
a square matrix as the skew-diagonal; reflecting $B$ in its skew-diagonal yields the flip-transpose $B^F$;  $B$ is $persymmetric$
if  $B^F=B$; and $B$ is $skew$-$persymmetric$
if  $B^F=-B$.  For fixed $m$, we write $Q$ for
$Q_m$, meaning that $Q$ is the $(m\times m)$-matrix with $1$ in all entries on the skew-diagonal, and
$0$ everywhere else.

\begin{proposition}
Letters $B, C,\ldots, $ will refer to $(m\times m)$-matrices. The following hold:

(1) $Q^2=\mathbbm{1}$
where $\mathbbm{1}$ signify the
identity element of $\mathrm{GL}_m(k)$.

(2) $B$ is persymmetric if only if $QB$ is symmetric; $B$ is persymmetric if only if $BQ$ is symmetric.
$B$ is skew-persymmetric if only if $QB$ is skew-symmetric; $B$ is skew-persymmetric if only if $BQ$ is skew-symmetric.

(3) $QB^TQ=B^F$.

(4) $(BC)^F=C^FB^F$.

(5) $(B^F)^{-1}=(B^{-1})^F$ for invertible $B$.
\end{proposition}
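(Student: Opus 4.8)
The plan is to verify each of the five assertions by direct matrix manipulation, exploiting the single structural fact that $Q = Q_m$ is the anti-identity: $Q_{ij} = \delta_{i,\,m+1-j}$. First I would establish (1): multiplying $Q$ by itself, the $(i,j)$-entry of $Q^2$ is $\sum_k Q_{ik}Q_{kj} = Q_{i,\,m+1-?}$, and since $Q_{ik}\neq 0$ forces $k = m+1-i$, we get $Q^2_{ij} = Q_{m+1-i,\,j} = \delta_{m+1-i,\,m+1-j} = \delta_{ij}$, so $Q^2 = \mathbbm{1}$. In particular $Q = Q^{-1}$ and $Q^T = Q$ (it is symmetric), facts I would record explicitly since they are used repeatedly below.

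Next I would treat (3) as the conceptual core: the flip-transpose $B^F$ is, by definition, the reflection of $B$ in the skew-diagonal, and I would check entrywise that $(QB^TQ)_{ij} = (B^T)_{m+1-i,\,m+1-j} = B_{m+1-j,\,m+1-i}$, which is precisely the $(i,j)$-entry of the skew-diagonal reflection of $B$; hence $B^F = QB^TQ$. With this formula in hand, parts (2), (4), (5) become short algebraic consequences. For (2): $B$ persymmetric means $B^F = B$, i.e. $QB^TQ = B$, equivalently (multiplying by $Q$ on the left using $Q^2=1$) $B^TQ = QB$, and taking transposes of $QB$ and using $Q^T = Q$ shows $QB$ is symmetric iff $(QB)^T = B^TQ = QB$; the other three equivalences in (2) are obtained symmetrically, multiplying on the appropriate side and, for the skew cases, carrying a sign through. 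For (4): $(BC)^F = Q(BC)^TQ = QC^TB^TQ = (QC^TQ)(QB^TQ) = C^FB^F$, where the middle step inserts $Q^2 = \mathbbm{1}$. For (5): from (4) with $C = B^{-1}$ we get $(B^{-1})^F B^F = (B^{-1}B)^F = \mathbbm{1}^F = \mathbbm{1}$, and since $\mathbbm{1}^F = Q\mathbbm{1}^TQ = Q^2 = \mathbbm{1}$, this identifies $(B^F)^{-1} = (B^{-1})^F$.

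I do not anticipate a genuine obstacle here: everything reduces to the identity $B^F = QB^TQ$ together with $Q^2 = \mathbbm{1}$ and $Q^T = Q$. The only point requiring mild care is bookkeeping of which side $Q$ is multiplied on in the four equivalences of part (2) and keeping the signs straight in the skew-persymmetric cases; I would present (1) and (3) first, then derive (2), (4), (5) from them so that no index juggling is repeated.
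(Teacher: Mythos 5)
Your verification is correct and complete; note that the paper states this proposition without any proof at all (these are standard facts about the anti-identity matrix, following Reid), so your entrywise check of (1) and (3) followed by the algebraic deductions of (2), (4), (5) supplies exactly what the paper leaves implicit. One cosmetic slip: in (5) the order-reversal of (4) gives $(B^{-1})^F B^F = (BB^{-1})^F$ rather than $(B^{-1}B)^F$ as you wrote, but since both products are $\mathbbm{1}$ and $\mathbbm{1}^F=\mathbbm{1}$, the conclusion is unaffected.
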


{\bf Notation: we sometime write $Q$ for
$Q_m$, meaning that $Q$ is the $(m\times m)$-matrix with $1$ in all entries on the skew-diagonal, and
$0$ everywhere else.}

~

Recall that the underlying field is $\mathbb{F}_{q^2}$ which has an automorphism $x\mapsto \overline{x}=x^q$ of order 2. For a matrix
$A=(a_{ij})$, $\overline{A}$ means $\overline{A}=(\overline{a_{ij}})$. We call $A$ is $conjugate$-$skew$-$persymmetric$
if  $\overline{A}^F=-A$.
The definition of general unitary group $U_{n}(\mathbb{F}_q)$ can be found in \cite{Car,Wang, Wi}.
We can recall the definition of unitary group  as follows.
$$\mathrm{U}_{n}(\mathbb{F}_q):=\{A\in \mathrm{GL}_n(\mathbb{F}_{q^2})|\overline{A}^{\mathrm{tr}}I_{n}A=I_{n}\}$$
where $I_n$ is identity matrix.

Now, set
$$\Gamma_n(Q_n, \mathbb{F}_{q^2}):=\{A\in \mathrm{GL}_n(\mathbb{F}_{q^2})|\overline{A}^{\mathrm{tr}}Q_{n}A=Q_{n}\}.$$
In fact, we can find invertible matrix $B$ such that $\overline{B}^{tr}Q_{n}B=I_n$. So, we have
$$\Gamma_n(Q_n, \mathbb{F}_{q^2})\cong \mathrm{U}_{n}(\mathbb{F}_q)$$
by the map $A\longmapsto B^{-1}AB.$

For the convenience of calculation, we will set
$$\mathrm{U}_{n}(\mathbb{F}_q)=\{A\in \mathrm{GL}_n(\mathbb{F}_{q^2})|\overline{A}^{\mathrm{tr}}Q_{n}A=Q_{n}\}$$
in the following sections

\subsection{For $n=2m$}

If $q$ is a power of $p$, we can calculate the Sylow $p$-subgroup $S$ of $\mathrm{U}_{2m}(\mathbb{F}_{q})$ as follows.

\begin{theorem}
Let $G=\mathrm{U}_{2m}(\mathbb{F}_{q})$ and  $q$ be a power of $p$. Let $\mathcal{S}$ be the Sylow $p$-subgroup  of $G$ as subgroup $G \cap \mathrm{SL}_{2m}(\mathbb{F}_{q^2})$ of the group $\mathrm{SL}_{2m}(\mathbb{F}_{q^2})$ of lower
unitriangular $2m\times 2m$-matrices over $\mathbb{F}_{q^2}$. Then the element of $\mathcal{S}$ has following form:
$$X_{D, P}:=\left(%
\begin{array}{cc}
(\overline{D}^F)^{-1} &  0 \\
DP &  D  \\
\end{array}%
\right)$$
where $D$ is a lower
unitriangular $m\times m$-matrix, and $P$ is a conjugate-skew-persymmetric $m\times m$-matrix.
\end{theorem}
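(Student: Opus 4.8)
The plan is to carry out the whole argument at the level of $m\times m$ blocks. Write an arbitrary lower unitriangular $2m\times 2m$ matrix in block form as $A=\left(\begin{smallmatrix} E & 0\\ F & D\end{smallmatrix}\right)$, where $E$ and $D$ are lower unitriangular $m\times m$ matrices and $F$ is an arbitrary $m\times m$ matrix; the upper-right block is forced to be $0$ and the two diagonal blocks carry the unit diagonal, while $F$ is unconstrained because its block position lies strictly below the diagonal. In this block form one has $Q_{2m}=\left(\begin{smallmatrix} 0 & Q_m\\ Q_m & 0\end{smallmatrix}\right)$. By hypothesis $\mathcal{S}$ consists of exactly those such $A$ which in addition lie in $G=\mathrm{U}_{2m}(\mathbb{F}_q)$, that is, satisfy $\overline{A}^{\mathrm{tr}}Q_{2m}A=Q_{2m}$. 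As a consistency check one can note that this really is a full Sylow $p$-subgroup: the $p$-part of $|\mathrm{U}_{2m}(\mathbb{F}_q)|$ is $q^{m(2m-1)}$, and the set of matrices $X_{D,P}$ has precisely this size, since the lower unitriangular $D$ contribute $q^{m(m-1)}$ and the conjugate-skew-persymmetric $P$ contribute $q^{m^2}$ (using that $x\mapsto x+x^q$ is a surjective $\mathbb{F}_q$-linear map $\mathbb{F}_{q^2}\to\mathbb{F}_q$ with kernel of size $q$, which governs the $m$ skew-diagonal entries of $P$).

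Next I would expand $\overline{A}^{\mathrm{tr}}Q_{2m}A$ in block form and compare it with $Q_{2m}$. The $(2,2)$-block comes out identically $0$, matching automatically; the $(1,2)$- and $(2,1)$-blocks yield $\overline{E}^{\mathrm{tr}}Q_mD=Q_m$ and $\overline{D}^{\mathrm{tr}}Q_mE=Q_m$, which are conjugate-transpose to one another (using the obvious facts $\overline{Q_m}=Q_m^{\mathrm{tr}}=Q_m$) and hence amount to a single condition; and the $(1,1)$-block yields the quadratic relation $\overline{E}^{\mathrm{tr}}Q_mF+\overline{F}^{\mathrm{tr}}Q_mE=0$. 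From $\overline{E}^{\mathrm{tr}}Q_mD=Q_m$, conjugating and transposing and then applying parts (3) and (5) of Proposition 3.1 gives $E=(\overline{D}^{\,F})^{-1}$; since flip-transpose, inversion and the entrywise $q$-power all preserve lower unitriangularity, $E$ is again lower unitriangular, and this is exactly the top-left block of $X_{D,P}$.

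It remains to convert the quadratic relation into the condition on $P$. Since $D$ is invertible, write $F=DP$. Substituting and using the two off-diagonal relations $\overline{E}^{\mathrm{tr}}Q_mD=Q_m$ and $\overline{D}^{\mathrm{tr}}Q_mE=Q_m$, the $(1,1)$-equation collapses cleanly to $Q_mP+\overline{P}^{\mathrm{tr}}Q_m=0$; left-multiplying by $Q_m$, using $Q_m^2=\mathbbm{1}$, and applying Proposition 3.1(3) to $\overline{P}$ rewrites this as $\overline{P}^{\,F}=-P$, i.e.\ $P$ is conjugate-skew-persymmetric. This proves every element of $\mathcal{S}$ has the claimed form $X_{D,P}$, and reversing each step shows conversely that every $X_{D,P}$ with $D$ lower unitriangular and $P$ conjugate-skew-persymmetric does lie in $\mathcal{S}$, so the description is exact. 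The one genuinely delicate point --- the place where I expect to have to be careful rather than merely computational --- is the bookkeeping that interleaves the three operations $\overline{(\cdot)}$, $(\cdot)^{\mathrm{tr}}$ and $(\cdot)^{F}$ with the involution $Q_m$; routing everything through the identities $Q_mB^{\mathrm{tr}}Q_m=B^F$, $(B^F)^{-1}=(B^{-1})^F$ of Proposition 3.1 together with $\overline{Q_m}=Q_m^{\mathrm{tr}}=Q_m$ is what keeps it under control.
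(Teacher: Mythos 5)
Your proposal is correct and follows essentially the same route as the paper's proof: expand $\overline{A}^{\mathrm{tr}}Q_{2m}A=Q_{2m}$ in $m\times m$ blocks, extract $E=(\overline{D}^{\,F})^{-1}$ from the off-diagonal block, substitute $F=DP$ into the $(1,1)$-block to get $\overline{P}^{\,F}=-P$, and confirm by counting that these matrices exhaust a full Sylow $p$-subgroup. Your treatment is slightly more explicit in two places (noting that the two off-diagonal block conditions are conjugate-transposes of one another, and spelling out the trace-map argument behind the count $q^{m^2}$ for the conjugate-skew-persymmetric matrices), but these are refinements of the same argument rather than a different one.
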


\begin{proof}Let $X=\left(%
\begin{array}{cc}
B &  0 \\
C &  D  \\
\end{array}%
\right)\in \mathrm{GL}_{2m}(\mathbb{F}_{q^2})$. We have $X\in \mathrm{U}_{2m}(\mathbb{F}_{q})$ if and only if
$$\left(%
\begin{array}{cc}
\overline{B}^T &  \overline{C}^T \\
0 &  \overline{D}^T  \\
\end{array}%
\right)\left(%
\begin{array}{cc}
0 &  Q \\
Q &  0  \\
\end{array}%
\right)\left(%
\begin{array}{cc}
B &  0 \\
C &  D  \\
\end{array}%
\right)=\left(%
\begin{array}{cc}
0 &  Q \\
Q &  0  \\
\end{array}%
\right).$$
That means
\begin{eqnarray*}
&~& \overline{B}^TQC=-\overline{C}^TQB~~~~~~~~~~~~~~~~~~~(\ast)\\
&~& \overline{B}^TQD=Q.~~~~~~~~~~~~~~~~~~~~~~~~~(\ast\ast)
\end{eqnarray*}
By $(\ast\ast)$ and Proposition 3.1(3), we have $B=(\overline{D}^F)^{-1}$ because $D$ and $B$ are invertible.
Set $P:=Q\overline{B}^TQC$, we have $P=D^{-1}C$ because $B=(\overline{D}^F)^{-1}$ and Proposition 3.1(3). By $(\ast)$ and Proposition 3.1(2),
we have $QP^TQ=-\overline{P}$. So, $P$ is conjugate-skew-persymmetric. Set $$X_{D, P}:=\left(%
\begin{array}{cc}
(\overline{D}^F)^{-1} &  0 \\
DP &  D  \\
\end{array}%
\right)$$
We can see that there are  $q^{m(m-1)}$
such $D$ and $(q^2)^{\frac{m(m-1)}{2}+\frac{m}{2}}$ such $P$(Here, the reason is that $QP^TQ=-\overline{P}$), we see
that there are $q^{m(2m-1)}=q^{\frac{n(n-1)}{2}}$
lower-triangular unipotent matrices in  $\mathrm{U}_{2m}(\mathbb{F}_q)$; and since these clearly
form a subgroup. So, we have $\mathcal{S}$ is a Sylow $p$-subgroup of $\mathrm{U}_{2m}(\mathbb{F}_q)$ because the order of $\mathrm{U}_{2m}(\mathbb{F}_q)$
is $q^{\frac{n(n-1)}{2}}\prod_{i=1}^n(q^i-(-1)^i)$(see\cite[p.66]{Wi}).
\end{proof}

We will prove that Sylow subgroup is a semidirect product.

\begin{theorem}
Let $G=\mathrm{U}_{2m}(\mathbb{F}_{q})$ and  $q$ be a power of $p$. Let $\mathcal{S}$ be the Sylow $p$-subgroup of $G$ defined as above.
Set $\mathcal{A}=:\{X_{\mathbbm{1}, P}|P\in M_m(\mathbb{F}_{q^2})~\mathrm{and}~P\mathrm{~is}$-$\mathrm{conjugate}$-$\mathrm{skew}$-$\mathrm{persymmetric}\}$ and
$\mathcal{D}:=\{X_{D, 0}|D\in M_m(\mathbb{F}_{q^2})~\mathrm{and}~D ~\mathrm{is~lower~unitriangular}\}$.
Then $\mathcal{S}\cong \mathcal{A}\rtimes \mathcal{D}$.
\end{theorem}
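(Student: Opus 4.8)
The plan is to realize the isomorphism $\mathcal{S}\cong\mathcal{A}\rtimes\mathcal{D}$ by checking three things: that $\mathcal{A}$ is a normal subgroup of $\mathcal{S}$, that $\mathcal{D}$ is a subgroup of $\mathcal{S}$, and that $\mathcal{A}\cap\mathcal{D}=1$ while $\mathcal{A}\mathcal{D}=\mathcal{S}$. The last point is essentially free from Theorem~3.5: every element of $\mathcal{S}$ is $X_{D,P}$ for a unique pair $(D,P)$, and a direct block multiplication gives the factorization $X_{D,P}=X_{\mathbbm{1},P'}X_{D,0}$ for a suitable conjugate-skew-persymmetric $P'$ (one computes $X_{\mathbbm{1},P'}X_{D,0}$ and matches blocks, which forces $P'=DPD^{-1}$ up to the bookkeeping imposed by the $(\overline{D}^F)^{-1}$ block — so $P' = \overline{D}^F P D$ or the analogous expression coming out of the top-left entry; the exact form is a routine calculation I would not grind through here). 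Uniqueness of the $(D,P)$ parametrization then gives $\mathcal{A}\cap\mathcal{D}=\{X_{\mathbbm{1},0}\}$ immediately.

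First I would verify that $\mathcal{D}$ is closed under multiplication and inversion: $X_{D_1,0}X_{D_2,0}$ has lower-right block $D_1D_2$, which is again lower unitriangular, and top-left block $(\overline{D_1}^F)^{-1}(\overline{D_2}^F)^{-1}=(\overline{(D_2D_1)}^F)^{-1}$ using Proposition~3.1(4),(5); the off-diagonal block stays $0$ because both factors have $P=0$. Hence $X_{D_1,0}X_{D_2,0}=X_{D_2D_1,0}$ (note the order-reversal, harmless) and $\mathcal{D}$ is a subgroup, in fact isomorphic to the group of lower unitriangular $m\times m$ matrices. Next I would check that $\mathcal{A}$ is a subgroup: $X_{\mathbbm{1},P_1}X_{\mathbbm{1},P_2}$ has top-left block $\mathbbm{1}$, lower-right block $\mathbbm{1}$, and off-diagonal block $P_1+P_2$, so $\mathcal{A}$ is isomorphic to the additive group of conjugate-skew-persymmetric matrices; in particular $\mathcal{A}$ is elementary abelian of rank $m^2$ (over $\mathbb{F}_p$, once one counts $\mathbb{F}_{q^2}$-dimensions and notes $q$ is a power of $p$).

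The main step — and the one I expect to carry the real content — is normality of $\mathcal{A}$ in $\mathcal{S}$. For this I would conjugate a general $X_{\mathbbm{1},P}\in\mathcal{A}$ by a general $X_{D,0}\in\mathcal{D}$ (since $\mathcal{D}$ together with $\mathcal{A}$ generates $\mathcal{S}$, conjugation-invariance under $\mathcal{D}$ suffices once $\mathcal{A}$ is already closed): a block computation of $X_{D,0}X_{\mathbbm{1},P}X_{D,0}^{-1}$ should land back in $\mathcal{A}$ with new parameter $P\mapsto DP\overline{D}^F$ or $(\overline{D}^F)^{-1}P D^{-1}$ (again the precise conjugation formula is the routine part). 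The one genuinely non-trivial point buried here is that this new matrix is still conjugate-skew-persymmetric, i.e.\ that the map $P\mapsto \text{(new }P)$ preserves the condition $Q\overline{P}^{\mathrm{tr}} Q=\overline{P}^F=-P$; this follows from Proposition~3.1(3),(4),(5) together with the fact that $\overline{\overline{D}}=D$, but it is the place where one must be careful. Once normality of $\mathcal{A}$ is in hand, the three conditions (normal complemented subgroup $\mathcal{A}$, complement $\mathcal{D}$, trivial intersection, product everything) give $\mathcal{S}=\mathcal{A}\rtimes\mathcal{D}$, and I would close by recording the resulting action of $\mathcal{D}$ on $\mathcal{A}$ explicitly since it will be needed for the later computation of $\mathfrak{X}(\mathcal{S})$.
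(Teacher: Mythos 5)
Your proposal is correct and follows essentially the same route as the paper: establish the multiplication and conjugation formulas in the $X_{D,P}$ coordinates, deduce that $\mathcal{A}$ is an abelian normal subgroup (with $\mathcal{D}$ acting by $P\mapsto DP\overline{D}^F$, which stays conjugate-skew-persymmetric by Proposition 3.1), that $\mathcal{D}$ is a subgroup, and that $\mathcal{A}\cap\mathcal{D}=1$ while $\mathcal{A}\mathcal{D}=\mathcal{S}$ via the factorization of $X_{D,P}$. One harmless slip: $X_{D_1,0}X_{D_2,0}=X_{D_1D_2,0}$ with no order reversal, since $(\overline{D_1}^F)^{-1}(\overline{D_2}^F)^{-1}=\bigl((\overline{D_1D_2})^F\bigr)^{-1}$ by Proposition 3.1(4)--(5), matching the lower-right block $D_1D_2$.
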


\begin{proof}First, if $P$ is conjugate-skew-persymmetric, we can see that $X_{\mathbbm{1}, P}\in \mathrm{U}_{2m}(\mathbb{F}_q)$, that means $X_{\mathbbm{1}, P}\in \mathcal{S}$.
Let $P'$ be another conjugate-skew-persymmetric, we have
$$X_{\mathbbm{1}, P}X_{\mathbbm{1}, P'}=X_{\mathbbm{1}, P+P'}=X_{\mathbbm{1}, P'}X_{\mathbbm{1}, P}.$$
So, $\mathcal{A}$ is abelian.

Let $D, D'$ be $D$ is a lower
unitriangular $m\times m$-matrices,
we have
\begin{eqnarray*}
X_{D, P}X_{D',P'}
&=& \left(%
\begin{array}{cc}
(\overline{D}^F)^{-1} &  0 \\
DP &  D  \\
\end{array}%
\right)\left(%
\begin{array}{cc}
(\overline{D}'^F)^{-1} &  0 \\
D'P' &  D'  \\
\end{array}%
\right)\\
&= & \left(%
\begin{array}{cc}
((\overline{DD'})^F)^{-1} &  0 \\
DP(\overline{D'}^F)^{-1}+DD'P' &  DD'  \\
\end{array}%
\right)\\
&=& X_{DD', D'^{-1}P(\overline{D'}^F)^{-1}+P'}.
\end{eqnarray*}
Hence, we have
$$X_{D, P}^{-1}=X_{D^{-1}, -DP\overline{D}^F}.$$
We can compute the commutator as follows:
\begin{eqnarray*}
[X_{\mathbbm{1}, P}, X_{D, P'}]
&=& X_{\mathbbm{1}, P}^{-1} X_{D, P'}^{-1} X_{\mathbbm{1}, P} X_{D, P'}\\
&=& X_{\mathbbm{1}, D^{-1}P(\overline{D}^{-1})^F-P}. ~~~~~~~~~~~~~~~~~~(\ast)
\end{eqnarray*}
Hence, we have $\mathcal{A}$ is a normal abelian subgroup of $\mathcal{S}$.

Secondly, we can see that
$$X_{D, 0}X_{D',0}=X_{DD', 0}.$$
That means $\mathcal{D}$ is a subgroup of $\mathcal{S}$.
And we can see that $$X_{D, P}=\left(%
\begin{array}{cc}
(\overline{D}^F)^{-1} &  0 \\
DP &  D  \\
\end{array}%
\right)=\left(%
\begin{array}{cc}
(\overline{D}^F)^{-1} &  0 \\
0&  D  \\
\end{array}%
\right)\left(%
\begin{array}{cc}
\mathbbm{1} &  0 \\
P &  \mathbbm{1}  \\
\end{array}%
\right)=X_{D, 0}X_{\mathbbm{1}, P}.$$
So, $\mathcal{S}=\mathcal{D}\rtimes\mathcal{A}$ because
$\mathcal{D}\cap \mathcal{A}=1$.
\end{proof}

We will prove that subgroup $\mathcal{A}$ is self-centralising.

\begin{theorem}Let $m$ be an integer with $m\geq 2$.
Let $\mathcal{S}$ and $\mathcal{A}$ be defined as above.
Then $C_{\mathcal{S}}(\mathcal{A})\leq \mathcal{A}$.
\end{theorem}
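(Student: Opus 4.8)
The plan is to strip the ``$\mathcal{A}$-part'' off a centralising element and reduce the whole statement to a linear-algebra fact: a lower unitriangular matrix $D$ satisfying $DP\overline{D}^F=P$ for \emph{every} conjugate-skew-persymmetric $P$ must equal $\mathbbm{1}$.

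\textbf{Step 1: reduction.} By the proof of Theorem~3.3 every element of $\mathcal{S}=\mathcal{D}\rtimes\mathcal{A}$ has the form $X_{D,P'}=X_{D,0}X_{\mathbbm{1},P'}$ with $D$ lower unitriangular and $P'$ conjugate-skew-persymmetric. The commutator formula displayed in that proof gives $[X_{\mathbbm{1},P},X_{D,P'}]=X_{\mathbbm{1},\,D^{-1}P(\overline{D}^{-1})^F-P}$, an expression not involving $P'$; so, using Proposition~3.1(5), the element $X_{D,P'}$ centralises $\mathcal{A}$ if and only if $P=DP\overline{D}^F$ for every conjugate-skew-persymmetric $P$. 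Hence it suffices to prove that this condition forces $D=\mathbbm{1}$, for then $X_{D,P'}=X_{\mathbbm{1},P'}\in\mathcal{A}$.

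\textbf{Step 2: straightening the twist.} Put $H:=PQ$. By Proposition~3.1(1),(3) the assignment $P\mapsto H$ is a bijection of $M_m(\mathbb{F}_{q^2})$ onto itself; $P$ is conjugate-skew-persymmetric (that is, $Q\overline{P}^TQ=-P$) exactly when $\overline{H}^T=-H$, i.e.\ $H$ is skew-Hermitian; and, since $\overline{D}^F=Q\overline{D}^TQ$ and $Q^2=\mathbbm{1}$, the equation $P=DP\overline{D}^F$ becomes $H=DH\overline{D}^T$ after right multiplication by $Q$. So the condition on $D$ reads: $DH\overline{D}^T=H$ for every skew-Hermitian $H\in M_m(\mathbb{F}_{q^2})$. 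Then I would observe that $X\mapsto DX\overline{D}^T$ is $\mathbb{F}_{q^2}$-linear (there is no conjugation on $X$); since $p$ is odd I fix $\lambda\in\mathbb{F}_{q^2}^\times$ with $\lambda^q=-\lambda$, note that $\lambda E_{rr}$ and, for $r\ne s$, the matrices $E_{rs}-E_{sr}$ and $\lambda E_{rs}-\overline{\lambda}E_{sr}$ are skew-Hermitian, and deduce (using $2\ne 0$) that their $\mathbb{F}_{q^2}$-span is all of $M_m(\mathbb{F}_{q^2})$. Therefore $DX\overline{D}^T=X$ for every $X$; taking $X=E_{jj}$ yields $(De_j)(\overline{De_j})^{T}=e_je_j^{T}$, and since $D$ is lower unitriangular the $j$-th entry of $De_j$ is $1$, which forces $De_j=e_j$ for all $j$, i.e.\ $D=\mathbbm{1}$. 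This gives $C_{\mathcal{S}}(\mathcal{A})\le\mathcal{A}$ (in fact equality, as $\mathcal{A}$ is abelian).

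\textbf{Main obstacle.} The crux is Step~2: recognising that conjugation by the skew-diagonal involution $Q$ converts the $\overline{D}^F$-twisted invariance into the familiar form $DH\overline{D}^T=H$, after which the spanning argument is routine. One can also bypass the substitution by feeding in conjugate-skew-persymmetric $P$ concentrated near the skew-diagonal and inducting on the distance from the main diagonal to annihilate the sub-diagonals of $D-\mathbbm{1}$ one at a time, but that route is noticeably more computational.
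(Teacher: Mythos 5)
Your proposal is correct, but it proves the statement by a genuinely different route from the paper. The paper argues by contradiction: writing $D=\mathbbm{1}+U$, it feeds in one explicit conjugate-skew-persymmetric test matrix for each index $s$ (namely $P_{ab}=\delta_{as}\delta_{b1}-\delta_{am}\delta_{bs'}$, a single $\pm1$ pair of entries placed symmetrically about the skew-diagonal), and then reads off the $(r,1)$- and $(r,2)$-entries of $UP+P\overline{U}^F+UP\overline{U}^F$ to kill the $s$-th column of $U$ entry by entry; the hypothesis $m\ge 2$ is used because the second column is needed to dispose of the entry $U_{ms}$. Your Step 1 is identical to the paper's reduction (both rest on the commutator formula $(\ast)$ of Theorem 3.3 and the observation that the commutator does not involve $P'$), but your Step 2 replaces the entry chase by a structural argument: the substitution $H=PQ$ turns the invariance condition into $DH\overline{D}^T=H$ for all skew-Hermitian $H$, and since $X\mapsto DX\overline{D}^T$ is $\mathbb{F}_{q^2}$-linear while the skew-Hermitian matrices $\mathbb{F}_{q^2}$-span $M_m(\mathbb{F}_{q^2})$ (your generators are indeed skew-Hermitian, $\lambda$ with $\lambda^q=-\lambda$ exists because $q$ is odd, and $2\ne 0$ recovers each $E_{rs}$), one gets $DX\overline{D}^T=X$ for \emph{all} $X$, whence $D=\mathbbm{1}$ from the rank-one matrices $E_{jj}$ and unitriangularity. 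What each buys: the paper's computation stays entirely elementary and parallels the analogous argument for general linear groups in \cite{GHM2}, at the cost of index bookkeeping with $s'$ and a separate use of $m\ge2$; your version is shorter, avoids the case distinction (it works verbatim for $m=1$, where the statement is vacuous anyway), and isolates the only field-theoretic inputs ($p$ odd, existence of trace-zero elements) cleanly. Both proofs are valid.
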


\begin{proof}
 Let $X_{D, P'}\in C_{\mathcal{S}}(X_{\mathbbm{1}, P})$, we have
$$[X_{\mathbbm{1}, P}, X_{D, P'}]=1.$$
By Theorem 3.3$(\ast)$, we have $D^{-1}P(\overline{D}^{-1})^F=P.$ That means
$DP\overline{D}^F=P.$ Since $D$ is a lower
unitriangular $m\times m$-matrix, we can set $D:=\mathbbm{1}+U$ for some lower triangular nilpotent matrix $U$.
So, $P=DP\overline{D}^F=(\mathbbm{1}+U)P(\mathbbm{1}+\overline{U}^F)$, that means $UP+P\overline{U}^F+UP\overline{U}^F=0$.
Hence, we have
$$X_{D, P'}\in C_{\mathcal{S}}(X_{\mathbbm{1}, P})\Longleftrightarrow UP+P\overline{U}^F+UP\overline{U}^F=0 ~~~~~~~~~~~~~~~~~(\ast)$$
where $D=\mathbbm{1}+U$.

Suppose that $X_{D, P'}\in C_{\mathcal{S}}(\mathcal{A})- \mathcal{A}$,
that means $D\neq 1$. Then $U\neq 0$.

To apply a similar method as in Theorem 3.4, we define $P\in M_m(\mathbb{F}_q)$ to be the matrix given by
$$P_{ab}=\delta_{as}\delta_{b1}-\delta_{am}\delta_{bs'}.$$
That means
 $$P:=\left(%
\begin{array}{ccc}
   &   &  \\
1  &   &  \\
&  -1 &  \\
\end{array}%
\right)\leftarrow\mathrm{(~in~ s^{th}~ row)}$$
$$\uparrow~~~~~~~~~~~$$
$$\mathrm{(~in~ s'^{th}~ column)}$$
So, $P$ is a skew-persymmetric $m\times m$ matix. That means $X_{1, P}\in\mathcal{A}$.
We will prove that $UP+P\overline{U}^F+UP\overline{U}^F\neq 0$ to get the contradiction.

We can see that
$$(UP)_{ab}=\sum_{t=1}^m U_{at}P_{tb}=U_{as}\delta_{b1}-U_{am}\delta_{bs'}=U_{as}\delta_{b1};  ~~~~~~\mathrm{For}~U_{am}=0$$
$$(P\overline{U}^F)_{ab}=-(\overline{U}P)_{b'a'}=-\overline{U}_{b's}\delta_{a'1}=-\overline{U}_{b's}\delta_{am}=\begin{cases}
-\overline{U}_{b's}& \text{if}~a=m,\\
0 & \text{otherwise;}
\end{cases}$$
$$(UP\overline{U}^F)_{ab}=\sum_{c=1}^m U_{ac}(P\overline{U}^F)_{cb}=U_{am}(P\overline{U}^F)_{mb}=0. ~~~~~~\mathrm{For}~U_{am}=0$$
Hence,
\begin{eqnarray*}
(UP+P\overline{U}^F+UP\overline{U}^F)_{r1}
&=& U_{rs}- U_{ms}\delta_{rm}+0\\
&=& \begin{cases}
0 & \text{if}~r=m,\\
U_{rs} & \text{if}~r\neq m
\end{cases}
\end{eqnarray*}
Since $X_{D, P'}\in C_{\mathcal{S}}(\mathcal{A})$, we have $U_{rs}=0$ if $r\neq m$.

Also, if $m\geq 2$, we have
\begin{eqnarray*}
(UP+P\overline{U}^F+UP\overline{U}^F)_{r2}
&=& U_{rs}\delta_{21}- U_{ms}\delta_{rm}+0\\
&=& \begin{cases}
-U_{ms} & \text{if}~r=m,\\
0 & \text{if}~r\neq m
\end{cases}
\end{eqnarray*}
Since $X_{D, P'}\in C_{\mathcal{S}}(\mathcal{A})$, we have $U_{ms}=0$.

Therefore, we have $U$ is zero matrix, that is a contradiction. So, $C_{\mathcal{S}}(\mathcal{A})\leq \mathcal{A}$.
\end{proof}

Now, we calculate $\mathfrak{X}(\mathcal{S})$ as follows.

\begin{theorem} Let $p$ be a prime with $p\geq 5$.
Let $\mathcal{S}$ and $\mathcal{A}$ be defined as above.
Then $\mathfrak{X}(\mathcal{S})=\mathcal{S}$.
\end{theorem}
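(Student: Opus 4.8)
The plan is to exhibit a $Q$-series for $\mathcal{S}$ itself, i.e. to show that $\mathcal{S}$ admits a chain of normal subgroups $1=Q_0\leq Q_1\leq\cdots\leq Q_n=\mathcal{S}$ satisfying the Engel-type condition $(\ast)$ of Definition 3.1. Since $\mathfrak{X}(\mathcal{S})$ is by definition the largest normal subgroup admitting such a chain, this immediately yields $\mathfrak{X}(\mathcal{S})=\mathcal{S}$. The natural first block of the chain is the abelian normal subgroup $\mathcal{A}$: because $\mathcal{A}$ is abelian and normal in $\mathcal{S}$, one can take a refinement $1=Q_0\leq Q_1\leq\cdots\leq Q_k=\mathcal{A}$ through $S$-invariant subgroups, and for each step $\Omega_1(C_{\mathcal{S}}(Q_{i-1}))$ contains $\mathcal{A}$ (since $Q_{i-1}\leq\mathcal{A}$ abelian), but what we actually need is that $[\Omega_1(C_{\mathcal{S}}(Q_{i-1})),Q_i;p-1]=1$. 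The cleanest route is to first handle $\mathcal{A}$ in one step if possible: check whether $[\Omega_1(C_{\mathcal{S}}(1)),\mathcal{A};p-1]=[\mathcal{S},\mathcal{A};p-1]=1$ — this is an Engel condition on the action of $\mathcal{A}$ on $\Omega_1(\mathcal{S})$, and here the commutator formula Theorem 3.3$(\ast)$ shows $[X_{\mathbbm 1,P},X_{D,P'}]=X_{\mathbbm 1,\,D^{-1}P(\overline D^{-1})^F-P}$, so commutators with $\mathcal{A}$ land in $\mathcal{A}$ and the nilpotency of $U=D-\mathbbm 1$ gives a bound on how many commutators kill things. I expect one needs $p-1\geq 2m-1$ or similar, which fails for large $m$, so $\mathcal{A}$ must be built up in several steps.

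The better structural approach uses Lemma 3.3 together with Theorem 3.5. By Theorem 3.5, $C_{\mathcal{S}}(\mathcal{A})\leq\mathcal{A}$, hence $C_{\mathcal{S}}(\mathcal{A})=Z(\mathcal{A})\cap C_{\mathcal{S}}(\mathcal{A})$; in fact since $\mathcal{A}$ is abelian, $\mathcal{A}\leq C_{\mathcal{S}}(\mathcal{A})\leq\mathcal{A}$, so $C_{\mathcal{S}}(\mathcal{A})=\mathcal{A}$. Now filter $\mathcal{A}$ by the lower-central-type series coming from the $\mathcal{D}$-action: set $\mathcal{A}=\mathcal{A}_0\geq\mathcal{A}_1\geq\cdots$ with $\mathcal{A}_{j+1}=[\mathcal{A}_j,\mathcal{S}]$; each $\mathcal{A}_j$ is normal in $\mathcal{S}$, and the quotients are central in $\mathcal{S}/\mathcal{A}_{j+1}$. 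Reading this from the bottom gives normal subgroups $1=B_0\leq B_1\leq\cdots\leq B_t=\mathcal{A}$ with $[B_i,\mathcal{S}]\leq B_{i-1}$, so trivially $[\Omega_1(C_{\mathcal{S}}(B_{i-1})),B_i]\leq[\mathcal{S},B_i]\leq B_{i-1}$ — but we need this iterated commutator to be $1$ after $p-1$ steps, not merely to drop a level. So instead one should take $B_i/B_{i-1}$ to be the full $\Omega_1$-layers and use that $[\Omega_1(C_{\mathcal{S}}(B_{i-1})),B_i]=1$ directly: if $B_{i-1}$ is chosen so that $C_{\mathcal{S}}(B_{i-1})$ is small enough that $B_i$ acts trivially on its $\Omega_1$. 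Concretely, once we are above some $B_{i-1}$, we have $C_{\mathcal{S}}(B_{i-1})\leq C_{\mathcal{S}}(\mathcal{A}')$ for a suitable sub-part $\mathcal{A}'$, and Theorem 3.5's computation pins $C_{\mathcal{S}}(\mathcal{A}')$ down.

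The key steps, in order, are: (1) record $C_{\mathcal{S}}(\mathcal{A})=\mathcal{A}$ from Theorem 3.5; (2) produce an explicit $S$-invariant filtration of $\mathcal{A}$ — e.g. by the rank filtration of the conjugate-skew-persymmetric matrices $P$, using the matrix-entry bookkeeping already developed in the proof of Theorem 3.5 — such that consecutive quotients are centralized by enough of $\mathcal{S}$; (3) verify $(\ast)$ for each step of this filtration, which reduces to the linear-algebra identity $UP+P\overline U^F+UP\overline U^F=0$ and a count of how deep in the lower central series of the unitriangular group $\mathcal{D}$ one must go, invoking $p\geq 5$ (equivalently $p-1\geq 4$) to absorb the short commutator lengths that arise at each layer; (4) append $\mathcal{D}$ on top using Remark 3.2, checking $[\Omega_1(C_{\mathcal{S}}(\mathcal{A})),\mathcal{D};p-1]=[\Omega_1(\mathcal{A}),\mathcal{D};p-1]=1$ via the same commutator formula; (5) conclude $\mathcal{S}$ itself is a $Q$-series terminus, hence $\mathfrak{X}(\mathcal{S})=\mathcal{S}$. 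The main obstacle is step (3): one must choose the filtration of $\mathcal{A}$ carefully enough that at every layer the relevant iterated commutator genuinely vanishes within $p-1$ steps rather than merely decreasing — this is exactly where the hypothesis $p\geq 5$ must be used, and getting the indexing of the skew-persymmetric entries to cooperate with the unitriangular lower central series is the delicate part.
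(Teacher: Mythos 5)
There is a genuine gap, and it sits precisely at the step you dismiss as routine. Your plan is a single $Q$-series for $\mathcal{S}$: filter $\mathcal{A}$ carefully, then append $\mathcal{D}$ by checking $[\Omega_1(C_{\mathcal{S}}(\mathcal{A})),\mathcal{D};p-1]=1$. Two remarks. First, the filtration of $\mathcal{A}$ is a non-issue and your worry about it reverses the roles of the two arguments in $(\ast)$: for the step $1\le\mathcal{A}$ one needs $[\Omega_1(\mathcal{S}),\mathcal{A};p-1]=1$, and since $\mathcal{A}$ is abelian and normal we have $[\Omega_1(\mathcal{S}),\mathcal{A}]\le\mathcal{A}$ and then $[[\Omega_1(\mathcal{S}),\mathcal{A}],\mathcal{A}]\le[\mathcal{A},\mathcal{A}]=1$, so two commutators suffice for any $p\ge 3$; no bound like $p-1\ge 2m-1$ enters here. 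Second, and fatally, your step (4) is false for large $m$: $[\mathcal{A},\mathcal{D};p-1]\ne 1$ in general. Each commutator with $X_{D,0}$ replaces $P$ by $UP+P\overline{U}^F+UP\overline{U}^F$ where $U=D^{-1}-\mathbbm{1}$ is strictly lower triangular; after $k$ iterations the surviving terms have the form $U^aP(\overline{U}^F)^b$ with $a+b\ge k$, and such a term need not vanish until $a\ge m$ or $b\ge m$, so one needs $k\ge 2m-1$ for a generic $D$. Hence once $m>p/2$ no chain of the form $1\le\mathcal{A}\le\mathcal{S}$ satisfies $(\ast)$, and interposing the lower central series of $\mathcal{D}$ does not repair this, because $\Omega_1(C_{\mathcal{S}}(Q_{i-1}))$ stays inside $\mathcal{A}$ and the same iterated-commutator count recurs at the top layer.

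The missing idea is not a finer filtration of $\mathcal{A}$ but a decomposition of $\mathcal{S}$ into many normal subgroups, each with its own short $Q$-series, glued by Remark 3.2. The paper takes Weir's abelian normal subgroups $\mathcal{N}_{ij}$ ($1\le j<i\le m$) of the lower unitriangular group, sets $\widetilde{\mathcal{N}}_{ij}=\{X_{D,P}\mid D\in\mathcal{N}_{ij},\ P \text{ conjugate-skew-persymmetric}\}$, and observes that these generate $\mathcal{S}$. For $D,D'\in\mathcal{N}_{ij}$ the corresponding $U,U'$ are supported in a single off-diagonal block (rows $\ge i$, columns $\le j$), so $UU'=0$ and $\overline{U}^F\overline{U'}^F=0$; a block computation then gives $[\mathcal{A},\widetilde{\mathcal{N}}_{ij};3]=1$. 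Together with $C_{\mathcal{S}}(\mathcal{A})\le\mathcal{A}$ (Theorem 3.4, which you correctly identify as the other key input) and $p-1\ge 4$, each chain $1\le\mathcal{A}\le\widetilde{\mathcal{N}}_{ij}$ is a $Q$-series, and Remark 3.2 assembles them into one terminating at $\mathcal{S}$. So your steps (1) and (5) are sound, but without the $\widetilde{\mathcal{N}}_{ij}$ decomposition the commutator lengths cannot be held below $p-1$, and the proof as proposed does not close.
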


\begin{proof} From \cite{Weir} and \cite{GHM2}, we can collect some abelian normal subgroups $\mathcal{N}_{ij}$ of the group of lower unitriangular matrices of $\mathrm{SL}_m(\mathbb{F}_{q^2})$ for $1\leq j < i \leq m$, where
$$\mathcal{N}_{ij}= \{M\in \mathrm{SL}_m(\mathbb{F}_{q^2})| M_{aa}=1; M_{ab}=0 ~\mathrm{for}~ a\neq b~ \mathrm{whenever}~ a < i ~\mathrm{or}~ b > j  \}$$
where  $M_{ab}\in \mathbb{F}_q$ denotes the $(a, b)$-entry of $M$
(the entry in the $a$th row and $b$th column).
Set $\widetilde{\mathcal{N}}_{ij}:=\{X_{D, P}|D\in\mathcal{N}_{ij}, \mathrm{and}~P \mathrm{~is}~$$\mathrm{conjugate}$-$\mathrm{skew}$-$\mathrm{persymmetric}\}$. We can see that
$\langle \widetilde{\mathcal{N}}_{ij}|1\leq j < i \leq m\rangle=\mathcal{S}$.
It therefore suffices to show that $1\leq \mathcal{A}\leq\mathcal{N}_{ij}$ is always a $Q$-series.

We will prove that
$$[[[X_{\mathbbm{1}, P}, X_{D , \hat{P}}], X_{D', \hat{P}'}],  X_{D^{''}, \hat{P}^{''}}]=1$$
for any $D, D', D^{''}\in \mathcal{N}_{ij}$ and any conjugate-skew-persymmetric matrices $P, \hat{P}, \hat{P}', \hat{P}^{''}$.

To calculate, we can set
$$D^{-1}=\mathbbm{1}+U,~ D^{'-1}=\mathbbm{1}+U',~ D^{''-1}=\mathbbm{1}+U^{''}$$
where $U, U', U^{''}$ are the elements of $M_m(\mathbb{F}_{q^2})$ and their entries satisfies
 $U_{ab} = 0, U'_{ab}=0, U^{''}_{ab}=0$ whenever
$a < i $ or $b > j$. So we can write $U, U', U^{''}$ as following block matrix:
$$U=\left(%
\begin{array}{cc}
0 & 0   \\
T & 0 \\
\end{array}%
\right),~
U'=\left(%
\begin{array}{cc}
0 & 0   \\
T' & 0 \\
\end{array}%
\right)~,
U^{''}=\left(%
\begin{array}{cc}
0 & 0   \\
T^{''} & 0 \\
\end{array}%
\right)$$
where $T$, $T'$ and $T^{''}$ are $(m-j)\times j$ matrices because $i>j$.

By Theorem 3.3$(\ast)$, we have
$$[X_{\mathbbm{1}, P}, X_{D , \hat{P}}]= X_{\mathbbm{1}, P'} ~~\mathrm{for}~~ P'=UP+P\overline{U}^F+UP\overline{U}^F.$$
To compute $P'$, $P$ can be viewed as following block matrix:
$$P=\left(%
\begin{array}{cc}
P_1 & P_2 \\
P_3 & P_4 \\
\end{array}%
\right)$$
where $P_1$ is a $j\times (m-j)$ matrix, $P_2$ is a $j\times j$ matrix, $P_3$ is a $(m-j)\times (m-j)$ matrix, and  $P_4$ is a $(m-j)\times j$ matrix.
So, we can see that
$$UP=\left(%
\begin{array}{cc}
0 & 0   \\
T & 0 \\
\end{array}%
\right)\left(%
\begin{array}{cc}
P_1 & P_2 \\
P_3 & P_4 \\
\end{array}%
\right)
=\left(%
\begin{array}{cc}
0 & 0   \\
TP_1 & TP_2 \\
\end{array}%
\right);
$$
$$P\overline{U}^F=\left(%
\begin{array}{cc}
P_1 & P_2 \\
P_3 & P_4 \\
\end{array}%
\right)\left(%
\begin{array}{cc}
0 & 0   \\
\overline{T}^F & 0 \\
\end{array}%
\right)
=\left(%
\begin{array}{cc}
P_2\overline{T}^F & 0   \\
P_4\overline{T}^F & 0 \\
\end{array}%
\right);
$$
$$UP\overline{U}^F=\left(%
\begin{array}{cc}
0 & 0   \\
T & 0 \\
\end{array}%
\right)\left(%
\begin{array}{cc}
P_1 & P_2 \\
P_3 & P_4 \\
\end{array}%
\right)\left(%
\begin{array}{cc}
0 & 0   \\
\overline{T}^F & 0 \\
\end{array}%
\right)
=\left(%
\begin{array}{cc}
0 & 0   \\
TP_2\overline{T}^F & 0 \\
\end{array}%
\right).
$$
So, we have
$$P'=\left(%
\begin{array}{cc}
P_2\overline{T}^F & 0   \\
TP_1+P_4\overline{T}^F+TP_2\overline{T}^F & TP_2 \\
\end{array}%
\right). ~~~~~~~~~~~~~~(\ast)$$

Also by Theorem 3.3$(\ast)$, we have
$$[X_{\mathbbm{1}, P'}, X_{D' , \hat{P}'}]= X_{\mathbbm{1}, P''} ~~\mathrm{for}~~ P{''}:=U'P'+P'\overline{U'}^F+U'P'\overline{U'}^F.$$
$P'$ can be viewed as following block matrix:
$$P'=\left(%
\begin{array}{cc}
P'_1 & P'_2 \\
P'_3 & P'_4 \\
\end{array}%
\right):=\left(%
\begin{array}{cc}
P_2\overline{T}^F & 0   \\
TP_1+P_4\overline{T}^F+TP_2\overline{T}^F & TP_2 \\
\end{array}%
\right).$$
By $(\ast)$, we have
$$P^{''}=\left(%
\begin{array}{cc}
0 & 0   \\
T'P_2\overline{T}^F+TP_2\overline{T'}^F & 0 \\
\end{array}%
\right).~~~~~~~~~~~~~(\ast\ast)$$

Also by Theorem 3.3$(\ast)$, we have
$$[X_{\mathbbm{1}, P^{''}}, X_{D^{''} , \hat{P}^{''}}]= X_{\mathbbm{1}, P^{'''}} ~~\mathrm{for}~~ P{'''}:=U^{''}P^{''}+P^{''}\overline{U^{''}}^{F}+U^{''}P^{''}\overline{U^{''}}^{F}.$$
By $(\ast\ast)$,  we have
$$P^{'''}=\left(%
\begin{array}{cc}
0 & 0   \\
T^{''}P^{'}_2\overline{T^{'}}^{F}+T^{'}P^{'}_2\overline{T^{''}}^{F} & 0 \\
\end{array}%
\right)=0.~~~~~~~~~~~~\mathrm{For}~P'_2=0$$
That means
$$[[[X_{\mathbbm{1}, P}, X_{D , \hat{P}}], X_{D', \hat{P}'}],  X_{D^{''}, \hat{P}^{''}}]=X_{\mathbbm{1}, P^{'''}}=X_{\mathbbm{1}, 0}=1. $$
If $m=1$, it is easy to see that $\mathfrak{X}(\mathcal{S})=\mathcal{S}$ because $p\geq 5$.
So, if $m\geq 2$, that means $C_{\mathcal{S}}(\mathcal{A})\leq \mathcal{A}$ by Theorem 3.4.
Hence,
$$[\mathcal{A},\widetilde{\mathcal{N}}_{ij}; 3]=1.$$
That means $1\leq \mathcal{A}\leq \widetilde{\mathcal{N}}_{ij}$ is always a $Q$-series because $p\geq 5$.
Hence, $\mathfrak{X}(\mathcal{S})=\mathcal{S}$.
\end{proof}

\subsection{\bf For $n=2m+1$}

$Notation.$ Let $\alpha$ be any $1\times m$ vector, if $P$ is called $\alpha$-$conjugate$-$skew$-$persymmetric$ if
$$P+\overline{P}^F=-Q\overline{\alpha}^T\alpha$$
where $P$ is an $m\times m$ matrix and $Q$ defined as above. We can see that $0$-conjugate-skew-persymmetric
means conjugate-skew-persymmetric.

If $q$ is a power of $p$, we can calculate the Sylow $p$-subgroup $S$ of $\mathrm{U}_{2m+1}(\mathbb{F}_{q})$ as follows.
\begin{theorem}
Let $G=\mathrm{U}_{2m+1}(\mathbb{F}_{q})$ and  $q$ be a power of $p$. Let $\mathcal{S}$ be the Sylow $p$-subgroup  of $G$ as subgroup $G \cap \mathrm{SL}_{2m+1}(\mathbb{F}_{q^2})$ of the group $\mathrm{SL}_{2m+1}(\mathbb{F}_{q^2})$ of lower
unitriangular $(2m+1)\times (2m+1)$-matrices over $\mathbb{F}_{q^2}$. Then  the element of $\mathcal{S}$ has following form:
$$X_{D, P, \alpha}:=\left(%
\begin{array}{ccc}
(\overline{D}^F)^{-1} &  0 & 0\\
\alpha   & 1  & 0 \\
DP & -DQ\overline{\alpha}^T & D  \\
\end{array}%
\right)$$
where $D$ is a lower
unitriangular $m\times m$-matrix, $\alpha$ is a $1\times m$ vector and $P$ is a $\alpha$-conjugate-skew-persymmetric $m\times m$-matrix.
\end{theorem}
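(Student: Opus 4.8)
The plan is to repeat the block calculation of Theorem 3.2 in the decomposition $2m+1 = m+1+m$, for which the defining matrix becomes
$$Q_{2m+1}=\begin{pmatrix} 0 & 0 & Q_m\\ 0 & 1 & 0\\ Q_m & 0 & 0\end{pmatrix}.$$
A general lower unitriangular matrix of $\mathrm{SL}_{2m+1}(\mathbb{F}_{q^2})$ is written accordingly as
$$X=\begin{pmatrix} B & 0 & 0\\ \gamma & 1 & 0\\ C & \beta & D\end{pmatrix},$$
with $B,D$ lower unitriangular $m\times m$-matrices, $\gamma$ a $1\times m$ row, $\beta$ an $m\times 1$ column and $C$ an arbitrary $m\times m$-matrix. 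I would expand $\overline{X}^{\mathrm{tr}}Q_{2m+1}X=Q_{2m+1}$ in blocks; the entries on and below the block diagonal either hold automatically by triangularity or are redundant by the Hermitian symmetry $\overline{M}^{\mathrm{tr}}=M$ of $M:=\overline{X}^{\mathrm{tr}}Q_{2m+1}X$, so the content reduces to the three equations $\overline{B}^{\mathrm{tr}}QD=Q$, $\overline{B}^{\mathrm{tr}}Q\beta+\overline{\gamma}^{\mathrm{tr}}=0$ and $\overline{B}^{\mathrm{tr}}QC+\overline{\gamma}^{\mathrm{tr}}\gamma+\overline{C}^{\mathrm{tr}}QB=0$ coming from the $(1,3)$-, $(1,2)$- and $(1,1)$-blocks.

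The first equation, exactly as in Theorem 3.2 via Proposition 3.1(3),(5), forces $B=(\overline{D}^F)^{-1}$, equivalently $\overline{B}^{\mathrm{tr}}Q=QD^{-1}$. Substituting this into the $(1,2)$-block gives $\beta=-DQ\overline{\gamma}^{\mathrm{tr}}$; setting $\alpha:=\gamma$ and $P:=D^{-1}C$ (so $C=DP$) then puts $X$ into the asserted form $X_{D,P,\alpha}$. For the $(1,1)$-block I would substitute $\overline{B}^{\mathrm{tr}}Q=QD^{-1}$, $C=DP$, $B=(\overline{D}^F)^{-1}$ and simplify $\overline{C}^{\mathrm{tr}}QB=\overline{P}^{\mathrm{tr}}\,\overline{D}^{\mathrm{tr}}QB$ to $\overline{P}^{\mathrm{tr}}Q$ using $\overline{D}^{\mathrm{tr}}Q=Q\overline{D}^F$ (Proposition 3.1(3)) and $B=(\overline{D}^F)^{-1}$, so that the $D$'s cancel and the block equation collapses to $QP+\overline{\gamma}^{\mathrm{tr}}\gamma+\overline{P}^{\mathrm{tr}}Q=0$. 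Multiplying on the left by $Q$ and using $Q\overline{P}^{\mathrm{tr}}Q=\overline{P}^F$ (Proposition 3.1(3)) turns this into $P+\overline{P}^F=-Q\overline{\alpha}^{\mathrm{tr}}\alpha$, i.e. $P$ is $\alpha$-conjugate-skew-persymmetric. Hence the lower unitriangular elements of $\mathrm{U}_{2m+1}(\mathbb{F}_q)$ are precisely the $X_{D,P,\alpha}$, and they are closed under multiplication since they are exactly $\mathrm{U}_{2m+1}(\mathbb{F}_q)$ intersected with the group of lower unitriangular matrices.

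To conclude that $\mathcal{S}$ is a Sylow $p$-subgroup I would count. Given $D$ and $\alpha$, the equation $P+\overline{P}^F=-Q\overline{\alpha}^{\mathrm{tr}}\alpha$ is solvable: its right-hand side is conjugate-persymmetric (a one-line check from Proposition 3.1(3)), and the map $P\mapsto P+\overline{P}^F$ on $M_m(\mathbb{F}_{q^2})$, being $\mathrm{id}$ plus an $\mathbb{F}_q$-linear involution, has image equal to that involution's fixed space since $p\neq 2$; so a solution exists and the full solution set is a coset of the $\mathbb{F}_q$-space of conjugate-skew-persymmetric matrices, which has size $q^{m^2}$ — the same count used for $\mathcal{A}$ in Theorem 3.2. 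With $q^{m(m-1)}$ choices for $D$, $q^{2m}$ for $\alpha$ and $q^{m^2}$ for $P$ there are $q^{2m^2+m}=q^{\binom{2m+1}{2}}$ such matrices; since they form a subgroup and $\prod_{i=1}^{2m+1}(q^i-(-1)^i)$ is prime to $p$, the order formula $|\mathrm{U}_{2m+1}(\mathbb{F}_q)|=q^{\binom{2m+1}{2}}\prod_{i=1}^{2m+1}(q^i-(-1)^i)$ from \cite[p.66]{Wi} shows $\mathcal{S}$ is Sylow. The step demanding the most care is the bookkeeping in the middle row and column: keeping track of where $\gamma$ and $\beta$ enter each block and applying the flip-transpose identities of Proposition 3.1 to the rectangular pieces with the correct signs, so that the correction term $-Q\overline{\alpha}^{\mathrm{tr}}\alpha$ emerges exactly as stated; the remaining manipulations are the even-dimensional computation verbatim.
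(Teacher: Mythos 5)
Your proposal is correct and follows essentially the same route as the paper: the same $m+1+m$ block decomposition of $\overline{X}^{\mathrm{tr}}Q_{2m+1}X=Q_{2m+1}$ yielding the three equations $(\ast)$, $(\ast\ast)$, $(\ast\ast\ast)$, the same identifications $B=(\overline{D}^F)^{-1}$, $\beta=-DQ\overline{\alpha}^{\mathrm{tr}}$, $P=D^{-1}C$ with $P+\overline{P}^F=-Q\overline{\alpha}^{\mathrm{tr}}\alpha$, and the same order count $q^{m(2m+1)}$ against Wilson's formula. Your explicit justification that the equation for $P$ is solvable for every $\alpha$ (via the image of $P\mapsto P+\overline{P}^F$ for $p$ odd) is a detail the paper leaves implicit, but the argument is otherwise the same.
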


\begin{proof}Let $X=\left(%
\begin{array}{ccc}
B &  0 & 0\\
\alpha  & 1  & 0 \\
 C & \beta & D  \\
\end{array}%
\right)\in M_{2m+1}(\mathbb{F}_{q^2})$. We have $X\in \mathrm{U}_{2m+1}(\mathbb{F}_{q})$ if and only if
$$\left(%
\begin{array}{ccc}
\overline{B}^T &  \overline{\alpha}^T & \overline{C}^T\\
0 & 1  & \overline{\beta}^T \\
0 & 0 & \overline{D}^T  \\
\end{array}%
\right)\left(%
\begin{array}{ccc}
0   & 0&  Q \\
0  &  1  &  0\\
Q & 0 & 0  \\
\end{array}%
\right)\left(%
\begin{array}{ccc}
B &  0 & 0\\
\alpha  & 1  & 0 \\
 C & \beta & D  \\
\end{array}%
\right)=\left(%
\begin{array}{ccc}
0   & 0&  Q \\
0  &  1  &  0\\
Q & 0 & 0  \\
\end{array}%
\right).$$
That means
\begin{eqnarray*}
&~& \overline{B}^TQC+\overline{C}^TQB=-\overline{\alpha}^T\alpha~~~~~~~~~~~~~~~~~~~(\ast)\\
&~& \overline{B}^TQD=Q~~~~~~~~~~~~~~~~~~~~~~~~~~~~~~~~~~~~~(\ast\ast)\\
&~& \beta=-Q(\overline{B}^{-1})^T\overline{\alpha}^T.~~~~~~~~~~~~~~~~~~~~~~~~~~~(\ast\ast\ast)
\end{eqnarray*}
By $(\ast\ast)$ and Proposition 3.1(3), we have $B=(D^F)^{-1}$ because $D$ and $B$ are invertible. By $(\ast\ast\ast)$, we have $\beta=-DQ\overline{\alpha}^T$.
Set $P:=Q\overline{B}^TQC$, we have $P=D^{-1}C$ because $B=(\overline{D}^F)^{-1}$ and Proposition 3.1(3).
By $(\ast)$ and Proposition 3.1(2), we have
$$P+\overline{P}^F=-Q\overline{\alpha}^T\alpha.$$
So, we have $P$ is $\alpha$-conjugate-skew-persymmetric.
Now, we can see that
$$X=\left(%
\begin{array}{ccc}
(\overline{D}^F)^{-1} &  0 & 0\\
\alpha  & 1  & 0 \\
 DP & -DQ\overline{\alpha}^T & D  \\
\end{array}%
\right)= \left(%
\begin{array}{ccc}
(\overline{D}^F)^{-1} &  0 & 0\\
0   & 1  & 0 \\
 0 & 0 & D  \\
\end{array}%
\right)\left(%
\begin{array}{ccc}
\mathbbm{1} &  0 & 0\\
\alpha   & 1  & 0 \\
P & -Q\overline{\alpha}^T & \mathbbm{1} \\
\end{array}%
\right)  $$
That means
$X=X_{D, P, \alpha}=X_{D, 0, 0}X_{\mathbbm{1}, P, \alpha}$.

Set $\mathcal{A}=:\{X_{\mathbbm{1}, P,\alpha}|\alpha \mathrm{~is ~a~}1\times m~ \mathrm{vector}, P \mathrm{~is}~\alpha$-$\mathrm{conjugate}$-$\mathrm{skew}$-$\mathrm{persymmetric}\}$,
 $\mathcal{A}_0=:\{X_{\mathbbm{1}, P, 0}| P \mathrm{~is}$ $\mathrm{conjugate}$-$\mathrm{skew}$-$\mathrm{persymmetric}\}$, we can see that $\mathcal{A}_0$ and $\mathcal{A}$
 are subgroups of $\mathcal{S}$.

 Let $X_{\mathbbm{1}, P, \alpha}$ and $X_{\mathbbm{1},P',\alpha'}$ be  elements of $\mathcal{A}$,
we have
\begin{eqnarray*}
[X_{\mathbbm{1}, P, \alpha}, X_{\mathbbm{1}, P', \alpha'}]
&=& X_{\mathbbm{1}, P, \alpha}^{-1} X_{\mathbbm{1}, P', \alpha'}^{-1} X_{\mathbbm{1}, P, \alpha} X_{\mathbbm{1}, P', \alpha'}\\
&=& X_{\mathbbm{1}, -P-Q\overline{\alpha}^T\alpha, -\alpha} X_{\mathbbm{1}, -P'-Q\overline{\alpha'}^T\alpha', -\alpha'} X_{\mathbbm{1}, P, \alpha} X_{\mathbbm{1}, P', \alpha'}\\
&=& X_{\mathbbm{1}, -P-P'-Q\overline{\alpha}^T\alpha-Q\overline{\alpha'}^T\alpha'-Q\overline{\alpha}^T\alpha', -\alpha-\alpha'}  X_{\mathbbm{1}, P+P'-Q\overline{\alpha}^T\alpha', \alpha+\alpha'}\\
&=& X_{\mathbbm{1}, Q\overline{\alpha'}^T\alpha-Q\overline{\alpha}^T\alpha', 0}. ~~~~~~~~~~~~~~~~~~(\star)
\end{eqnarray*}
That means $[\mathcal{A}_0,\mathcal{A}]=1$, $[\mathcal{A}, \mathcal{A}]\leq \mathcal{A}_0$ and $\mathcal{A}_0\unlhd \mathcal{A}$.

We can see that there are  $(q^2)^{\frac{m(m-1)}{2}}$
such $D$,  $(q^2)^{\frac{m(m-1)}{2}+\frac{m}{2}}$ such $\mathcal{A}_0$ and $(q^2)^m$ such $\mathcal{A}/\mathcal{A}_0$,  we see
that there are $q^{m(m-1)+m^2+2m}=q^{m(2m+1)}=q^{\frac{n(n-1)}{2}}$
lower-triangular unipotent matrices in  $\mathrm{U}_{2m+1}(\mathbb{F}_q)$; and since these clearly
form a subgroup. So, we have $\mathcal{S}$ is a Sylow $p$-subgroup of $\mathrm{U}_{2m+1}(\mathbb{F}_q)$.
\end{proof}

We will prove some properties of Sylow subgroup as follows.
\begin{theorem}
Let $G=\mathrm{U}_{2m+1}(\mathbb{F}_{q})$ and  $q$ be a power of $p$. Let $\mathcal{S}$ be the Sylow $p$-subgroup of $G$ defined as above. Let
$\mathcal{A}$ and $\mathcal{A}_0$ be defined as above. Set
$\mathcal{D}:=\{X_{D, 0, 0}|$ $D ~\mathrm{is~lower~unitriangular}\}$. Then
we have

(1) $\mathcal{A}_0$ is an abelian normal subgroup of $\mathcal{S}$;

(2) Let $ X_{D, P', \alpha}\in \mathcal{S}$, then $[X_{\mathbbm{1}, P, 0}, X_{D, P', \alpha}]=[X_{\mathbbm{1}, P, 0}, X_{D,  0, 0}]$ for each $P$ 0-conjugate-skew-persymmetric, $P'$ $\alpha$-conjugate-skew-persymmetric;

(3) $\mathcal{S}\cong \mathcal{A}\rtimes \mathcal{D}$.
\end{theorem}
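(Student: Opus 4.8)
The plan is to first extract a single master identity that governs all of $\mathcal{S}$, and then read off all three statements as one-line specializations. Carrying out the block multiplication of $X_{D,P,\alpha}$ and $X_{D',P',\alpha'}$ and simplifying with Proposition 3.1(3)--(5) (in particular using $(\overline{B}^F)^{-1}=(\overline{B^{-1}})^F$ and $(BC)^F=C^FB^F$ to identify the first and the $(3,2)$ blocks of the product) yields
$$X_{D,P,\alpha}\,X_{D',P',\alpha'}=X_{DD',\; D'^{-1}P(\overline{D'}^F)^{-1}+P'-D'^{-1}Q\overline{\alpha}^T\alpha',\; \alpha(\overline{D'}^F)^{-1}+\alpha'}.$$
Its relevant special cases are $X_{D,0,0}X_{D',0,0}=X_{DD',0,0}$, $X_{\mathbbm{1},P,\alpha}X_{\mathbbm{1},P',\alpha'}=X_{\mathbbm{1},P+P'-Q\overline{\alpha}^T\alpha',\alpha+\alpha'}$, the inverse $X_{D,P,\alpha}^{-1}=X_{D^{-1},-D(P+Q\overline{\alpha}^T\alpha)\overline{D}^F,-\alpha\overline{D}^F}$, and the decomposition $X_{D,P,\alpha}=X_{D,0,0}X_{\mathbbm{1},P,\alpha}$ already recorded in Theorem 3.7. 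I also note, from the master identity, the conjugation formulas $X_{D,0,0}^{-1}X_{\mathbbm{1},P,0}X_{D,0,0}=X_{\mathbbm{1},D^{-1}P(\overline{D}^F)^{-1},0}$ and $X_{D,0,0}^{-1}X_{\mathbbm{1},P,\alpha}X_{D,0,0}=X_{\mathbbm{1},D^{-1}P(\overline{D}^F)^{-1},\alpha(\overline{D}^F)^{-1}}$.

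For (1): $\mathcal{A}_0$ is a subgroup and $\mathcal{A}_0\unlhd\mathcal{A}$ are already stated in Theorem 3.7, and $\mathcal{A}_0$ is abelian by formula $(\star)$ with $\alpha=\alpha'=0$. To get $\mathcal{A}_0\unlhd\mathcal{S}$ it suffices, since $\mathcal{S}=\mathcal{D}\mathcal{A}$, to note that $\mathcal{D}$ normalizes $\mathcal{A}_0$: the conjugation formula above shows $X_{D,0,0}^{-1}X_{\mathbbm{1},P,0}X_{D,0,0}=X_{\mathbbm{1},D^{-1}P(\overline{D}^F)^{-1},0}$, which lies in $\mathcal{S}$ with trivial first block and trivial row-vector part, hence in $\mathcal{A}_0$. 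For (2): write $X_{D,P',\alpha}=X_{D,0,0}X_{\mathbbm{1},P',\alpha}$ and apply $[a,bc]=[a,c]\,[a,b]^c$ with $a=X_{\mathbbm{1},P,0}$, $b=X_{D,0,0}$, $c=X_{\mathbbm{1},P',\alpha}$. The factor $[a,c]=[X_{\mathbbm{1},P,0},X_{\mathbbm{1},P',\alpha}]$ is trivial by $(\star)$ because the row-vector of $a$ is $0$, so $[X_{\mathbbm{1},P,0},X_{D,P',\alpha}]=[X_{\mathbbm{1},P,0},X_{D,0,0}]^{X_{\mathbbm{1},P',\alpha}}$; and $[X_{\mathbbm{1},P,0},X_{D,0,0}]=X_{\mathbbm{1},D^{-1}P(\overline{D}^F)^{-1}-P,0}\in\mathcal{A}_0$ by the master identity, while $[\mathcal{A}_0,\mathcal{A}]=1$ by Theorem 3.7, so the conjugation by $X_{\mathbbm{1},P',\alpha}\in\mathcal{A}$ is trivial, giving the asserted equality.

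For (3): $\mathcal{D}$ is a subgroup by $X_{D,0,0}X_{D',0,0}=X_{DD',0,0}$; $\mathcal{S}=\mathcal{D}\mathcal{A}$ by the decomposition in Theorem 3.7; and $\mathcal{A}\cap\mathcal{D}=1$ is immediate by comparing block entries ($D=\mathbbm{1}$ and $P=0,\alpha=0$ force the identity). Finally $\mathcal{A}\unlhd\mathcal{S}$: $\mathcal{A}$ is a subgroup (Theorem 3.7) and $\mathcal{D}$ normalizes it since the conjugation formula gives $X_{D,0,0}^{-1}X_{\mathbbm{1},P,\alpha}X_{D,0,0}=X_{\mathbbm{1},D^{-1}P(\overline{D}^F)^{-1},\alpha(\overline{D}^F)^{-1}}\in\mathcal{A}$; hence $\mathcal{S}=\mathcal{D}\mathcal{A}$ normalizes $\mathcal{A}$, and $\mathcal{S}\cong\mathcal{A}\rtimes\mathcal{D}$.

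The main obstacle is purely bookkeeping: deriving the master multiplication identity in the correct form, which requires carefully tracking how the bar automorphism, transpose, and flip-transpose $F$ interact through Proposition 3.1 (the $(3,2)$-block check, where $((D'^{-1})^F)^T=QD'^{-1}Q$ is needed, is the one genuinely delicate simplification), and confirming that the $\alpha$-conjugate-skew-persymmetry of $P$ is consistent with the formula so that every matrix written down really lies in $\mathcal{S}$. After that, each of (1)--(3) is a handful of substitutions plus the standard commutator identity.
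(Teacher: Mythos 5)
Your proposal is correct and follows essentially the same route as the paper: the decomposition $X_{D,P',\alpha}=X_{D,0,0}X_{\mathbbm{1},P',\alpha}$, the identity $[a,bc]=[a,c][a,b]^c$ together with $[\mathcal{A}_0,\mathcal{A}]=1$ for part (2), and explicit block computations (your single master multiplication formula correctly consolidates the separate matrix products the paper carries out, including the commutator $[X_{\mathbbm{1},P,0},X_{D,0,0}]=X_{\mathbbm{1},D^{-1}P(\overline{D}^F)^{-1}-P,0}$). The only cosmetic difference is that you deduce normality of $\mathcal{A}$ and $\mathcal{A}_0$ from the conjugation formula rather than from the paper's explicit computation of $[X_{\mathbbm{1},P,\alpha},X_{D,0,0}]$.
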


\begin{proof}First, if $P$ is 0-conjugate-skew-persymmetric, we can see that $X_{\mathbbm{1}, P, 0}\in \mathrm{U}_{2m+1}(\mathbb{F}_{q})$, that means $X_{\mathbbm{1}, P, 0}\in \mathcal{S}$.
Let $P'$ be another 0-conjugate-skew-persymmetric, we have
$$X_{\mathbbm{1}, P, 0}X_{\mathbbm{1}, P', 0}=X_{\mathbbm{1}, P+P', 0}=X_{\mathbbm{1}, P', 0}X_{\mathbbm{1}, P, 0}.$$
So, $\mathcal{A}_0$ is abelian.

By Theorem 3.6, we have $X_{D, P', \alpha}=X_{D, 0, 0}X_{\mathbbm{1}, P', \alpha}$.
 We can calculate the commutator as follows:
\begin{eqnarray*}
&~&[X_{\mathbbm{1}, P, 0}, X_{D, 0, 0}]
= X_{\mathbbm{1}, P, 0}^{-1} X_{D, 0, 0}^{-1} X_{\mathbbm{1}, P, 0} X_{D, 0, 0}\\
&=& \left(%
\begin{array}{ccc}
\mathbbm{1} &  0 & 0\\
0  & 1  & 0 \\
-P & 0 & \mathbbm{1} \\
\end{array}%
\right)\left(%
\begin{array}{ccc}
\overline{D}^F &  0 & 0\\
0  & 1  & 0 \\
0 & 0 & D^{-1} \\
\end{array}%
\right)    \left(%
\begin{array}{ccc}
\mathbbm{1} &  0 & 0\\
0  & 1  & 0 \\
P & 0 & \mathbbm{1} \\
\end{array}%
\right)\left(%
\begin{array}{ccc}
(\overline{D}^F)^{-1} &  0 & 0\\
0  & 1  & 0 \\
0 & 0 & D \\
\end{array}%
\right)               \\
&=&\left(%
\begin{array}{ccc}
\overline{D}^F &  0 & 0\\
0   & 1  & 0 \\
-P\overline{D}^F & 0 & D^{-1} \\
\end{array}%
\right)\left(%
\begin{array}{ccc}
(\overline{D}^F)^{-1} &  0 & 0\\
0   & 1  & 0 \\
P(\overline{D}^F)^{-1} & 0 & D \\
\end{array}%
\right)\\
&=& \left(%
\begin{array}{ccc}
\mathbbm{1} &  0 & 0\\
0   & 1  & 0 \\
-P+ D^{-1}P(\overline{D}^F)^{-1} & 0 & \mathbbm{1} \\
\end{array}%
\right)\\
&=& X_{\mathbbm{1}, -P+ D^{-1}P(\overline{D}^F)^{-1}, 0}.~~~~~~~~~~~~~~~~~~~~~~~~~~~~~~~~~~~(\ast)
\end{eqnarray*}

Since $[\mathcal{A}_0,\mathcal{A}]=1$, we can see that
\begin{eqnarray*}
[X_{\mathbbm{1}, P, 0}, X_{D, P', \alpha}]
&=& [X_{\mathbbm{1}, P, 0}, X_{D, 0, 0}X_{\mathbbm{1}, P', \alpha}]\\
&=&[X_{\mathbbm{1}, P, 0}, X_{\mathbbm{1}, P', \alpha}][X_{\mathbbm{1}, P, 0}, X_{D, 0, 0}]^{X_{\mathbbm{1}, P', \alpha}}\\
&=&[X_{\mathbbm{1}, P, 0}, X_{D, 0, 0}]^{X_{\mathbbm{1}, P', \alpha}} \\
&=& [X_{\mathbbm{1}, P, 0}, X_{D, 0, 0}].~~~~~~~~~~~~~~~~~~~~~~~\mathrm{For}~ (\ast)
\end{eqnarray*}
Hence, (2) holds.

We can calculate the commutator as follows:
\begin{eqnarray*}
&~&[X_{\mathbbm{1}, P, \alpha}, X_{D, 0, 0}]= X_{\mathbbm{1}, P, \alpha}^{-1} X_{D, 0, 0}^{-1} X_{\mathbbm{1}, P, \alpha} X_{D, 0, 0}\\
&=& X_{\mathbbm{1}, -P-Q\overline{\alpha}^T\alpha, -\alpha} X_{D^{-1}, 0, 0} X_{\mathbbm{1}, P, \alpha} X_{D, 0, 0}\\
&=&\left(%
\begin{array}{ccc}
\overline{D}^F &  0 & 0\\
-\alpha \overline{D}^F   & 1  & 0 \\
-P\overline{D}^F -Q\overline{\alpha}^T\alpha D^F & Q\overline{\alpha}^T & D^{-1} \\
\end{array}%
\right)\left(%
\begin{array}{ccc}
(\overline{D}^F)^{-1} &  0 & 0\\
\alpha (\overline{D}^F)^{-1}   & 1  & 0 \\
P(\overline{D}^F)^{-1} & -Q\overline{\alpha}^T & D \\
\end{array}%
\right)\\
&=& \left(%
\begin{array}{ccc}
\mathbbm{1} &  0 & 0\\
-\alpha +\alpha (\overline{D}^F)^{-1}   & 1  & 0 \\
-P-Q\overline{\alpha}^T\alpha+Q\overline{\alpha}^T\alpha (\overline{D}^F)^{-1}+ D^{-1}P(\overline{D}^F)^{-1} & Q\overline{\alpha}^T-Q\overline{\alpha}^TD^{-1} & \mathbbm{1} \\
\end{array}%
\right)
\end{eqnarray*}
So, we have $\mathcal{A}_0$ and $\mathcal{A}$ are normal subgroup of $\mathcal{S}$.

Secondly, we can see that
$$X_{D, 0,0}X_{D',0,0}=X_{DD', 0,0}.$$
That means $\mathcal{D}$ is a subgroup of $\mathcal{S}$.
So, $\mathcal{S}=\mathcal{D}\rtimes\mathcal{A}$ because
$\mathcal{D}\cap \mathcal{A}=1$.
\end{proof}

We will prove that $C_{\mathcal{S}}(\mathcal{A})\leq \mathcal{A}_0$.

\begin{theorem} Let $m$ be an integer with $m\geq 2$.
Let $\mathcal{S}$ and $\mathcal{A}$ be defined as above.
Then $C_{\mathcal{S}}(\mathcal{A})\leq \mathcal{A}_0$.
\end{theorem}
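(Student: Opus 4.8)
The plan is to follow the template of Theorem 3.4, but in two stages dictated by the finer structure in odd dimension: first kill the lower-unitriangular part $D$ of a centralising element using only the smaller subgroup $\mathcal{A}_0$, and then kill the vector part $\alpha$ using all of $\mathcal{A}$. Throughout I would work inside the semidirect decomposition $\mathcal{S}=\mathcal{D}\rtimes\mathcal{A}$ and use the commutator identities already recorded, namely formula $(\star)$ in Theorem 3.6 and formula $(\ast)$ together with part (2) of Theorem 3.7.

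\textbf{Stage 1 ($D=\mathbbm{1}$).} Let $X_{D,P',\alpha}\in C_{\mathcal{S}}(\mathcal{A})$; in particular it centralises $\mathcal{A}_0\leq\mathcal{A}$. For every $0$-conjugate-skew-persymmetric $P$, part (2) of Theorem 3.7 together with formula $(\ast)$ in the proof of Theorem 3.7 gives
$$[X_{\mathbbm{1},P,0},X_{D,P',\alpha}]=[X_{\mathbbm{1},P,0},X_{D,0,0}]=X_{\mathbbm{1},\,-P+D^{-1}P(\overline{D}^F)^{-1},\,0},$$
so triviality of this commutator for all such $P$ forces $DP\overline{D}^F=P$ for every conjugate-skew-persymmetric $P$. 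Writing $D=\mathbbm{1}+U$ with $U$ a lower-triangular nilpotent matrix, this is exactly the relation $UP+P\overline{U}^F+UP\overline{U}^F=0$ that appears in the proof of Theorem 3.4; since the explicit test matrices used there are skew-persymmetric with entries in the prime field, hence $0$-conjugate-skew-persymmetric, the computation of Theorem 3.4 applies and yields $U=0$, i.e.\ $D=\mathbbm{1}$. This is the step that genuinely uses $m\geq 2$.

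\textbf{Stage 2 ($\alpha=0$).} Now $X_{D,P',\alpha}=X_{\mathbbm{1},P',\alpha}\in\mathcal{A}$, and it remains to show $\alpha=0$, which places $X_{\mathbbm{1},P',\alpha}$ in $\mathcal{A}_0$. For any $1\times m$ vector $\beta$ and any $\beta$-conjugate-skew-persymmetric $\widehat{P}$, formula $(\star)$ of Theorem 3.6 gives
$$[X_{\mathbbm{1},\widehat{P},\beta},X_{\mathbbm{1},P',\alpha}]=X_{\mathbbm{1},\,Q\overline{\alpha}^T\beta-Q\overline{\beta}^T\alpha,\,0}.$$
Since $X_{\mathbbm{1},P',\alpha}$ centralises $\mathcal{A}$ and $Q$ is invertible, $\overline{\alpha}^T\beta=\overline{\beta}^T\alpha$ for every $\beta$. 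Taking $\beta=e_j$, the $j$-th standard basis row vector, and comparing $(a,b)$-entries gives $\overline{\alpha_a}\,\delta_{jb}=\delta_{aj}\,\alpha_b$ for all indices $a,b,j$. Fixing $b$ and choosing $a=j$ with $a\neq b$, which is possible because $m\geq 2$, the left-hand side vanishes and the right-hand side equals $\alpha_b$, so $\alpha_b=0$; as $b$ was arbitrary, $\alpha=0$. Hence $X_{D,P',\alpha}=X_{\mathbbm{1},P',0}\in\mathcal{A}_0$, proving $C_{\mathcal{S}}(\mathcal{A})\leq\mathcal{A}_0$.

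The only substantive point is the reuse of the computation inside Theorem 3.4, showing that the quadratic relation $UP+P\overline{U}^F+UP\overline{U}^F=0$ holding for all conjugate-skew-persymmetric $P$ forces $U=0$ when $m\geq 2$; everything else is bookkeeping with the commutator formulas. One caveat to check carefully is that part (2) of Theorem 3.7 is invoked legitimately: it requires $P$ to be $0$-conjugate-skew-persymmetric, but that is precisely the range of $P$ over which we need the relation $DP\overline{D}^F=P$, so no generality is lost, and the reduction to the $n=2m$ computation is exact.
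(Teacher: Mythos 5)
Your proposal is correct and follows essentially the same route as the paper: first use Theorem 3.7(2) and its formula $(\ast)$ to reduce centralising $\mathcal{A}_0$ to the relation $DP\overline{D}^F=P$ and invoke the Theorem 3.4 computation to get $C_{\mathcal{S}}(\mathcal{A}_0)\leq\mathcal{A}$, then use Theorem 3.6$(\star)$ to show $C_{\mathcal{A}}(\mathcal{A})\leq\mathcal{A}_0$. The only difference is that you spell out the two steps the paper leaves as "by the similar method of Theorem 3.4" and "by Theorem 3.6$(\star)$ because $m\geq 2$", including the explicit choice $\beta=e_j$ in the second stage.
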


\begin{proof} Let $X_{D, P', \alpha}\in C_{\mathcal{S}}(X_{\mathbbm{1}, P, 0})$, since
$X_{D, P', \alpha}=X_{D, 0, 0}X_{\mathbbm{1}, P', \alpha}$ and $[X_{\mathbbm{1}, P', \alpha}, X_{\mathbbm{1}, P, 0}]=1$,
we have
$$[X_{\mathbbm{1}, P, 0}, X_{D, 0, 0}]=1$$
by Theorem 3.7(2).
So, by Theorem 3.7$(\ast)$, we have $$D^{-1}P(\overline{D}^{-1})^F=P.~~~~~~~~~~~~~~~~~~~(\ast)$$ That means
$DP\overline{D}^F=P.$
 By the similar method of Theorem 3.4, we have $C_{\mathcal{S}}(\mathcal{A}_0)\leq \mathcal{A}$.
Hence, $C_{\mathcal{S}}(\mathcal{A})\leq C_{\mathcal{S}}(\mathcal{A}_0)\leq \mathcal{A}$.

By Theorem 3.6$(\star)$, we have $C_{\mathcal{A}}(\mathcal{A})\leq \mathcal{A}_0$ because $m\geq 2$.  Hence,  $C_{\mathcal{S}}(\mathcal{A})\leq \mathcal{A}_0$.
\end{proof}

Now, we calculate $\mathfrak{X}(\mathcal{S})$ as follows.

\begin{theorem}  Let $p$ be a prime with $p\geq 5$.
Let $\mathcal{S}$, $\mathcal{D}$, $\mathcal{A}$ and $\mathcal{A}_0$ as above.
Then $\mathfrak{X}(\mathcal{S})=\mathcal{S}$.
\end{theorem}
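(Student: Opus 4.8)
\emph{Proof proposal.} The plan is to run the argument of Theorem 3.5 in the odd-dimensional setting, the only genuine change being that $\mathcal A$ is now nilpotent of class $2$ rather than abelian, with the extra layer absorbed by $\mathcal A_0$. First I would dispose of $m=1$: then $\mathcal D=1$, so $\mathcal S=\mathcal A$, and Theorem 3.6$(\star)$ gives $[\mathcal S,\mathcal S]\le\mathcal A_0$ and $[\mathcal A_0,\mathcal S]=1$, whence $[\Omega_1(\mathcal S),\mathcal S;2]=1$; since $p\ge 5$ this makes $1\le\mathcal S$ a $Q$-series, so $\mathfrak{X}(\mathcal S)=\mathcal S$. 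Assume $m\ge 2$ henceforth.

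Next I would introduce, for $1\le j<i\le m$, the subgroups
$$\widetilde{\mathcal N}_{ij}:=\{X_{D,P,\alpha}\mid D\in\mathcal N_{ij},\ \alpha\text{ a }1\times m\text{ vector},\ P\ \alpha\text{-conjugate-skew-persymmetric}\},$$
where the $\mathcal N_{ij}$ are the abelian normal subgroups of the lower unitriangular group of $\mathrm{SL}_m(\mathbb F_{q^2})$ recalled in the proof of Theorem 3.5. Using the matrix description of Theorem 3.6, together with the facts that each $\mathcal N_{ij}$ is a subgroup normalised by every lower unitriangular matrix and that $\bigcup_{i,j}\mathcal N_{ij}$ generates the whole lower unitriangular group of $\mathrm{SL}_m(\mathbb F_{q^2})$ (Weir \cite{Weir}, \cite{GHM2}), one checks that each $\widetilde{\mathcal N}_{ij}$ is a normal subgroup of $\mathcal S$ with $\mathcal A\le\widetilde{\mathcal N}_{ij}$, and that $\langle\widetilde{\mathcal N}_{ij}\mid 1\le j<i\le m\rangle=\mathcal S$ (because $\mathcal S=\mathcal A\rtimes\mathcal D$ by Theorem 3.7(3), and the $X_{D,0,0}$ with $D\in\bigcup_{i,j}\mathcal N_{ij}$ generate $\mathcal D$). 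It then suffices, by Remark 2.2, to produce for each pair $(i,j)$ a $Q$-series ending at $\widetilde{\mathcal N}_{ij}$: splicing these yields a $Q$-series ending at $\prod_{i,j}\widetilde{\mathcal N}_{ij}=\mathcal S$, giving $\mathcal S\le\mathfrak{X}(\mathcal S)$ and hence equality.

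The heart of the matter is that, for every $(i,j)$, the chain $1\le\mathcal A\le\widetilde{\mathcal N}_{ij}$ is a $Q$-series. For the first link, $\mathcal A\unlhd\mathcal S$ and, by Theorem 3.6$(\star)$, $[\mathcal A,\mathcal A]\le\mathcal A_0$ with $[\mathcal A_0,\mathcal A]=1$; thus $\mathcal A$ has class $2$ and $[\Omega_1(\mathcal S),\mathcal A;3]=1$, so condition $(\ast)$ holds since $p-1\ge 4$. (If smaller jumps are preferred, refine the chain to $1\le\mathcal A_0\le\mathcal A\le\widetilde{\mathcal N}_{ij}$, using in addition that $C_{\mathcal S}(\mathcal A_0)\le\mathcal A$, as in the proof of Theorem 3.8.) For the second link, Theorem 3.8 gives $\Omega_1(C_{\mathcal S}(\mathcal A))\le C_{\mathcal S}(\mathcal A)\le\mathcal A_0$, so it is enough to prove $[\mathcal A_0,\widetilde{\mathcal N}_{ij};3]=1$. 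Here I would reuse the computation in the proof of Theorem 3.5 essentially verbatim: by Theorem 3.7(2) and Theorem 3.7$(\ast)$, for $X_{\mathbbm{1},P,0}\in\mathcal A_0$ and $X_{D,\hat P,\hat\alpha}\in\widetilde{\mathcal N}_{ij}$,
$$[X_{\mathbbm{1},P,0},X_{D,\hat P,\hat\alpha}]=[X_{\mathbbm{1},P,0},X_{D,0,0}]=X_{\mathbbm{1},\,UP+P\overline U^F+UP\overline U^F,\,0},\qquad D^{-1}=\mathbbm{1}+U,$$
which lies in $\mathcal A_0$; note that $-P+D^{-1}P(\overline D^F)^{-1}=UP+P\overline U^F+UP\overline U^F$ is exactly the expression occurring in the proof of Theorem 3.5 (via Theorem 3.3$(\ast)$). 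Hence the commutator action of $\widetilde{\mathcal N}_{ij}$ on $\mathcal A_0$ is the action analysed there, and the same block bookkeeping applies: since $i>j$, $U$ is supported in its lower-left $(m-j)\times j$ block, so the top-right block of the matrix produced by one commutator dies, and therefore the triple iterated commutator is $0$. This gives $[\mathcal A_0,\widetilde{\mathcal N}_{ij};3]=1$, and $p-1\ge 4>3$ closes the second link.

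I expect the only real obstacle to be the identity $[\mathcal A_0,\widetilde{\mathcal N}_{ij};3]=1$; but since every commutator of an element of $\mathcal A_0$ with an element of $\widetilde{\mathcal N}_{ij}$ collapses into the slice $\mathcal A_0$ (where the persymmetry constraint is the one already handled for $n=2m$), this is not a new calculation but the one of Theorem 3.5 with the harmless extra coordinates $\alpha$ carried along, so the difficulty is bookkeeping rather than any fresh idea.
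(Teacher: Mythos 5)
Your proposal is correct and follows essentially the same route as the paper: decompose $\mathcal S$ as $\langle\widetilde{\mathcal N}_{ij}\rangle$, show $1\le\mathcal A\le\widetilde{\mathcal N}_{ij}$ is a $Q$-series by reducing $[X_{\mathbbm{1},P,0},X_{D,\hat P,\hat\alpha}]$ to $[X_{\mathbbm{1},P,0},X_{D,0,0}]$ via Theorem 3.7(2) and then running the block computation of Theorem 3.5, and invoke Theorem 3.8 for $C_{\mathcal S}(\mathcal A)\le\mathcal A_0$. You are in fact slightly more careful than the paper in explicitly verifying the first link $[\Omega_1(\mathcal S),\mathcal A;3]=1$ using the class-$2$ structure of $\mathcal A$, which the paper leaves implicit.
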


\begin{proof} The following proof is similar to the proof of Theorem 3.5. Let $\mathcal{N}_{ij}$ be defined as in Theorem 3.5.
Set $\widetilde{\mathcal{N}}_{ij}:=\{X_{D, P, \alpha}|D\in\mathcal{N}_{ij}, \alpha ~\mathrm{is~a}~1\times m~\mathrm{vector~and~}P \mathrm{~is}~\alpha$-$\mathrm{conjugate}$-$\mathrm{skew}$-$\mathrm{persymmetric}\}$.
We can see that
$\langle \widetilde{\mathcal{N}}_{ij}|1\leq j < i \leq m\rangle=\mathcal{S}$.
It therefore suffices to show that $1\leq \mathcal{A}\leq\widetilde{\mathcal{N}}_{ij}$ is always a $Q$-series.

We will prove that
$$[[[X_{\mathbbm{1}, P, 0}, X_{D , \hat{P}, \alpha}], X_{D', \hat{P}',\alpha'}],  X_{D^{''}, \hat{P}^{''}, \alpha^{''}}]=1. $$
for any $D, D', D^{''}\in \mathcal{N}_{ij}$, any $1\times m$ vectors $\alpha, \alpha', \alpha^{''}$,
any 0-conjugate-skew-persymmetric matrices $P$, any $\alpha$-conjugate-skew-persymmetric matrices  $\hat{P}, $   any $\alpha'$-conjugate-skew-persymmetric matrices  $\hat{P}', $  and  any $\alpha^{''}$-conjugate-skew-persymmetric matrices $\hat{P}^{''}$.

To calculate, we can set
$$D^{-1}=\mathbbm{1}+U,~ D^{'-1}=\mathbbm{1}+U',~ D^{''-1}=\mathbbm{1}+U^{''}$$
where $U, U', U^{''}$ are the elements of $M_m(\mathbb{F}_{q^2})$ and their entries satisfies
 $U_{ab} = 0, U'_{ab}=0, U^{''}_{ab}=0$ whenever
$a < i $ or $b > j$. So we can write $U, U', U^{''}$ as following block matrix:
$$U=\left(%
\begin{array}{cc}
0 & 0   \\
T & 0 \\
\end{array}%
\right),~
U'=\left(%
\begin{array}{cc}
0 & 0   \\
T' & 0 \\
\end{array}%
\right)~,
U^{''}=\left(%
\begin{array}{cc}
0 & 0   \\
T^{''} & 0 \\
\end{array}%
\right)$$
where $T$, $T'$ and $T^{''}$ are $(m-j)\times j$ matrices because $i>j$.

By Theorem 3.7$(2)$ and Theorem 3.7$(\ast)$, we have
$$[X_{\mathbbm{1}, P, 0}, X_{D , \hat{P}, \alpha}]= X_{\mathbbm{1}, P', 0} ~~\mathrm{for}~~ P'=UP+P\overline{U}^F+UP\overline{U}^F.$$
To compute $P'$, $P$ can be viewed as following block matrix:
$$P=\left(%
\begin{array}{cc}
P_1 & P_2 \\
P_3 & P_4 \\
\end{array}%
\right)$$
where $P_1$ is a $j\times (m-j)$ matrix, $P_2$ is a $j\times j$ matrix, $P_3$ is a $(m-j)\times (m-j)$ matrix, and  $P_4$ is a $(m-j)\times j$ matrix.
So, we have
$$P'=\left(%
\begin{array}{cc}
P_2\overline{T}^F & 0   \\
TP_1+P_4\overline{T}^F+TP_2\overline{T}^F & TP_2 \\
\end{array}%
\right). ~~~~~~~~~~~~~~(\ast)$$

Similarly,  we have
$$[X_{\mathbbm{1}, P', 0}, X_{D' , \hat{P}',\alpha'}]= X_{\mathbbm{1}, P'', 0} ~~\mathrm{for}~~ P{''}:=U'P'+P'\overline{U'}^F+U'P'\overline{U'}^F.$$
$P'$ can be viewed as following block matrix:
$$P'=\left(%
\begin{array}{cc}
P'_1 & P'_2 \\
P'_3 & P'_4 \\
\end{array}%
\right):=\left(%
\begin{array}{cc}
P_2\overline{T}^F & 0   \\
TP_1+P_4\overline{T}^F+TP_2\overline{T}^F & TP_2 \\
\end{array}%
\right).$$
By $(\ast)$, we have
$$P^{''}=\left(%
\begin{array}{cc}
0 & 0   \\
T'P_2\overline{T}^F+TP_2\overline{T'}^F & 0 \\
\end{array}%
\right).~~~~~~~~~~~~~(\ast\ast)$$

Similarly, we have
$$[X_{\mathbbm{1}, P^{''}, 0}, X_{D^{''} , \hat{P}^{''}, \alpha^{''}}]= X_{\mathbbm{1}, P^{'''}, 0} ~~\mathrm{for}~~ P{'''}:=U^{''}P^{''}+P^{''}\overline{U^{''}}^{F}+U^{''}P^{''}\overline{U^{''}}^{F}.$$
$P^{''}$ can be viewed as following block matrix:
$$P^{''}=\left(%
\begin{array}{cc}
P^{''}_1 & P^{''}_2 \\
P^{''}_3 & P^{''}_4 \\
\end{array}%
\right):=\left(%
\begin{array}{cc}
0 & 0   \\
T'P_2\overline{T}^F+TP_2\overline{T'}^F & 0 \\
\end{array}%
\right).$$
By $(\ast\ast)$,  we have
$$P^{'''}=\left(%
\begin{array}{cc}
0 & 0   \\
T^{''}P^{'}_2\overline{T^{'}}^{F}+T^{'}P^{'}_2\overline{T^{''}}^{F} & 0 \\
\end{array}%
\right)=0.~~~~~~~~~~~~\mathrm{For}~F'_2=0$$
That means
$$[[[X_{\mathbbm{1}, P, 0}, X_{D , \hat{P}, \alpha}], X_{D', \hat{P}',\alpha'}],  X_{D^{''}, \hat{P}^{''}, \alpha^{''}}]=X_{\mathbbm{1}, P^{'''}, 0}=X_{\mathbbm{1}, 0, 0}=1. $$
If $m=1$, it is easy to see that $\mathfrak{X}(\mathcal{S})=\mathcal{S}$ because $p\geq 5$.
So, if $m\geq 2$, that means $C_{\mathcal{S}}(\mathcal{A})\leq \mathcal{A}_0$ by Theorem 3.8. Hence, we have
$$[\Omega_1(C_{\mathcal{S}}(\mathcal{A})), \widetilde{\mathcal{N}}_{ij}; 3]\leq [\mathcal{A}_0, \widetilde{\mathcal{N}}_{ij}; 3]=1.$$
That means $1\leq \mathcal{A}\leq \widetilde{\mathcal{N}}_{ij}$ is always a $Q$-series because $p\geq 5$.
Hence, $\mathfrak{X}(\mathcal{S})=\mathcal{S}$.
\end{proof}

\textbf{ACKNOWLEDGMENTS}\hfil\break
We are very grateful for the reviewer of \cite{LXZ}. With the reviewer's suggestions and using the reviewer's ideas for symplectic groups and orthogonal
groups, we can avoid a lot of calculations for unitary groups.

\end{document}